\newtheorem{pro}{Proposition}[section]
\newtheorem{thm}[pro]{Theorem}
\newtheorem{lem}[pro]{Lemma}
\newtheorem{cor}[pro]{Corollary}
\newtheorem*{qthm}{Theorem}
\theoremstyle{definition}
\newtheorem{dfn}[pro]{Definition}
\theoremstyle{remark}
\newcommand{\bdy}{\partial}
\title{Locally Helical Surfaces have bounded twisting}
\date{\today}
\address{Pitzer College}
\email{bachman@pitzer.edu}
\author{David Bachman}
\begin{document}

\address{Quest University}
\email{rdt@questu.ca}
\author{Ryan Derby-Talbot}

\address{DePaul University}
\email{esedgwick@cdm.depaul.edu}
\author{Eric Sedgwick}

\begin{abstract}
A topologically minimal surface may be isotoped into a normal form with respect to a fixed triangulation. If the intersection with each tetrahedron is simply connected, then the pieces of this normal form are triangles, quadrilaterals, and helicoids. Helical pieces can have any number of positive or negative twists. We show here that the net twisting of the helical pieces of any such surface in a given triangulated 3-manifold is bounded. 
\end{abstract}

\maketitle

\section{Introduction}
\label{s:intro}

In \cite{TopIndexI}, the first author introduced the notion of a {\it topologically minimal} surface, as a generalization of incompressible \cite{haken:68}, strongly irreducible \cite{cg:87}, and critical \cite{crit} surfaces. Such surfaces have a well-defined {\it index}, where incompressible, strongly irreducible, and critical surfaces have indices 0, 1, and 2, respectively. 

The term ``topologically minimal" was chosen because in many ways, such surfaces behave like geometrically minimal surfaces, i.e.~surfaces that represent critical points for the area function. Similarities between the two types of surfaces are made explicit in e.g.~\cite{TopIndexI} and \cite{TopMinNormalII}, and one of the goals of the present paper to present further similarities. 

A useful fact about topologically minimal surfaces is that they can be isotoped into a  standard normal form with respect to a triangulation. This was first done by Kneser \cite{kneser:29} and Haken \cite{haken:61} in the index 0 case, Rubinstein \cite{rubinstein:93} and Stocking \cite{stocking:96} for closed index 1 surfaces,  and \cite{Index1Normal} for index 1 surfaces with boundary. The general case of arbitrary index is addressed by the first author in \cite{TopMinNormalI}, \cite{TopMinNormalII}, and \cite{TopMinNormalIII}. The following theorem summarizes these results:

\begin{thm}
\label{Global2Local}
Let $M$ be a compact, orientable, irreducible, triangulated 3-manifold with incompressible boundary. Then for each $n$ there exists a finite, constructible set of surfaces in each tetrahedron of $M$ from which one can build any index $n$ topologically minimal surface in $M$ (up to isotopy). 
\end{thm}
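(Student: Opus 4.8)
The plan is to reduce the theorem to a local, combinatorial bound on the pieces of a normal form, and then to extract finiteness and constructibility from that bound; the index $0$ case of such a reduction is due to Kneser \cite{kneser:29} and Haken \cite{haken:61}, the index $1$ case to Rubinstein \cite{rubinstein:93} and Stocking \cite{stocking:96}, and the argument in general is carried out in \cite{TopMinNormalI, TopMinNormalII, TopMinNormalIII}. Fix an index $n$ topologically minimal surface $H \subset M$. First I would isotope $H$ into a \emph{normal form} with respect to the triangulation, so that $H$ meets the $2$-skeleton in a collection of normal arcs and curves and meets each tetrahedron $\Delta$ in a disjoint union of properly embedded surfaces with boundary on $\bdy\Delta$. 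This is a complexity-minimization argument: among all surfaces isotopic to $H$, choose one minimizing the weight (the number of intersections with the $1$-skeleton), together with an appropriate secondary complexity; since these are nonnegative integers a minimizer exists, and innermost-disk and outermost-arc surgeries show it can be taken to meet the $2$-skeleton efficiently. The subtlety absent from the index $0$ case is that a disk of $H$ produced by such a surgery may itself be a compressing disk; one deals with this by arguing via the disk complex $\mathcal{D}(H)$ in place of incompressibility, using that isotopy does not change the homotopy type of $\mathcal{D}(H)$, so the minimizer is still topologically minimal of index $n$.

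The key step, which I expect to be the main obstacle, is to bound the topological complexity of each \emph{exceptional} piece --- a component $P$ of $H \cap \Delta$ that is neither a normal triangle nor a normal quadrilateral. Such a $P$ sits inside the ball $\Delta$ and can be put into a standard sweepout position, and the tangencies and extra topology it carries (a saddle for an almost normal octagon, a compressible tube, a stack of saddles for a helicoid) produce, after being pushed slightly off $\bdy\Delta$, compressing and $\bdy$-compressing disks for $H$ in $M$. A piece of large genus or with many boundary components produces a correspondingly rich family of such disks; these map into $\mathcal{D}(H)$, and a local-to-global analysis over the tetrahedra shows that the resulting structure --- roughly a join of the local contributions --- would make $\mathcal{D}(H)$ more highly connected than $(n-2)$-connected. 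Since a surface of index $n$ has $\mathcal{D}(H)$ exactly $(n-2)$-connected but not $(n-1)$-connected, we conclude that $\mathrm{genus}(P) + |\bdy P|$ is bounded by a function of $n$ and that the number of exceptional pieces is bounded as well; for $n = 1$ this recovers the conclusion of \cite{rubinstein:93, stocking:96} that a strongly irreducible Heegaard surface has at most one almost normal piece. Making precise the amount of connectivity a complicated local piece contributes, and controlling the helicoidal pieces --- which are substantially more intricate than the almost normal octagons and tubes --- is the technical core of \cite{TopMinNormalI, TopMinNormalII, TopMinNormalIII}.

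Finally I would deduce finiteness and constructibility. Once $\mathrm{genus}(P) + |\bdy P|$ and the number of exceptional pieces are bounded by functions of $n$, a classification of the bounded-complexity properly embedded surfaces in a model tetrahedron (with boundary a union of normal curves) shows that, modulo the twisting of the helicoidal pieces, only finitely many pieces occur: the normal triangles and quadrilaterals, and the bounded-complexity exceptional pieces, the helicoids among these being determined by a combinatorial type together with an integer twisting parameter. Since all of the bounds are explicit functions of $n$ and the tetrahedron is a fixed combinatorial object, this list can be produced algorithmically --- the asserted constructibility. Assembling copies of these pieces along their normal boundary curves in the $2$-skeleton, with a choice of twisting parameter for each helical piece, then recovers up to isotopy every index $n$ topologically minimal surface in $M$; the net effect of these twisting parameters is exactly what the rest of the paper bounds.
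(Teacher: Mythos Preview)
The paper does not actually prove this theorem. Theorem~\ref{Global2Local} is stated in the introduction as a summary of results established elsewhere: the sentence immediately preceding it reads ``The general case of arbitrary index is addressed by the first author in \cite{TopMinNormalI}, \cite{TopMinNormalII}, and \cite{TopMinNormalIII}. The following theorem summarizes these results.'' No proof is given in the present paper, so there is nothing here to compare your sketch against beyond the bare citations, which you have correctly identified.

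Your outline is a plausible reconstruction of what those cited papers do, but your final paragraph contains a confusion worth flagging. You write that, modulo the twisting parameter, only finitely many pieces occur, and that ``the net effect of these twisting parameters is exactly what the rest of the paper bounds.'' This misreads the logical structure. The finiteness asserted in Theorem~\ref{Global2Local} already requires a bound on the twisting of each helical piece; that bound (total absolute twisting at most $n$) is Theorem~\ref{t:BoundedLength}, which the paper again attributes to \cite{TopMinNormalI} and \cite{TopMinNormalII}, not to the present paper. What \emph{this} paper proves (Theorem~\ref{t:mainthm}) is a bound on \emph{net} twisting for an arbitrary locally helical surface, with no topological minimality hypothesis at all --- a different statement with a different role. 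So the twisting bound needed for the finiteness in Theorem~\ref{Global2Local} is not deferred to the rest of this paper; it is part of the package being cited.
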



The pieces from which index $n$ surfaces can be built by Theorem \ref{Global2Local} can be quite complicated. However, in \cite{TopMinNormalII} the first author gives a relatively simple characterization of those components that are simply connected: such pieces are either triangles or helicoids\footnote{We regard quadrilaterals as untwisted helicoids.} (see Figure \ref{f:16-gon}). We say any surface built entirely from such pieces is {\it locally helical.}

\begin{figure}
\[\includegraphics[width=3 in]{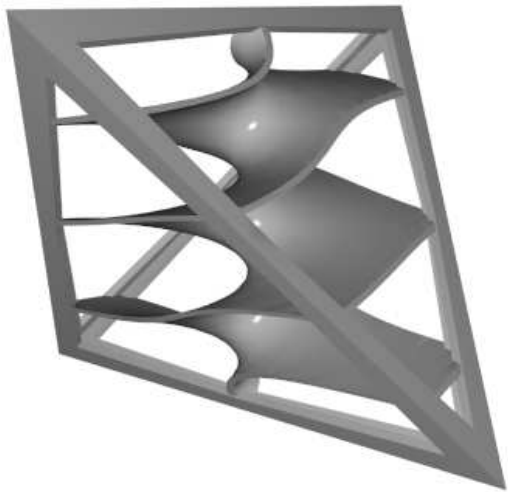}\]
\caption{A helicoid whose boundary has length 16. Note that it meets one pair of opposite edges in single points, a second pair in three points, and a third pair in four points. The twisting of this helicoid is 3.}
\label{f:16-gon}
\end{figure} 

Helical pieces are classified by their {\it axis} (see Section \ref{s:axis}) and {\it twisting.} If $H_*$ is a helicoid then the number of normal arcs comprising $\bdy H_*$ is $4(n+1)$, for some $n$. The {\it twisting} of $H_*$, denoted $t(H_*)$, is  the number $\pm n$, where the sign is determined by the handedness of the helicoid and the orientation of the manifold (see Definition~\ref{def:twisting}). If $H$ is a locally helical surface in a triangulated 3-manifold $M$, then the {\it net twisting} of $H$ is the sum of the twisting of all of its helical pieces (see Definition \ref{d:NetTwisting} for a more precise definition). The {\it total absolute twisting} is the sum of the absolute values of the twisting of its helical pieces. Note that if a surface has bounded total absolute twisting, then each helical piece has a bounded number of twists. If, on the other hand, the net twisting is bounded then there may be helical pieces with an arbitrarily large number of, say, positive twists, as long as there are also pieces with large numbers of negative twists.

The results of \cite{TopMinNormalI} and \cite{TopMinNormalII}, taken together, imply the following:

\begin{thm}
\label{t:BoundedLength}
Any topologically minimal surface with index $n$ that is isotopic to a locally helical surface is isotopic to one with total absolute twisting at most $n$. 
\end{thm}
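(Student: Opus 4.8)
\section*{Proof proposal}

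The plan is to start from a locally helical representative $H$ of the given isotopy class and perform isotopies that \emph{unwind} and \emph{cancel} the helical pieces until no such move is available, and then to bound the total absolute twisting of the resulting surface by the index. There are two ingredients, both drawn from \cite{TopMinNormalI} and \cite{TopMinNormalII}: (i) a local \emph{unwinding move} --- if a helicoid piece $H_*$ with $t(H_*)\neq 0$ meets neighboring normal/helical pieces that leave ``room'', one can isotope $H$ across a disk so as to replace $H_*$ by a helicoid of twisting $t(H_*)\mp 1$ while pushing the lost twist into an adjacent tetrahedron (or, if a piece of the opposite handedness is encountered, cancelling it outright), the result again being locally helical; and (ii) the fact that when \emph{every} unwinding and cancellation move on the helical pieces is blocked, each helical piece of nonzero twisting is the source of an honest compressing disk for $H$, with the side on which the disk lies determined by the handedness of the piece and the orientation of $M$ (hence by the sign of its twisting). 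Ingredient (i) is essentially the local normal-form analysis behind Theorem~\ref{Global2Local}; ingredient (ii) is where the helicoid classification of \cite{TopMinNormalII} enters.

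First I would set up the termination. The total absolute twisting is a nonnegative integer that is non-increasing under unwinding moves and strictly decreasing under cancellation moves, and (using that the $4(n+1)$ normal arcs of a helicoid boundary can be re-expressed after an unwinding move in terms of the normal data of the adjacent tetrahedron) one checks that an unwinding move cannot create new twisting that offsets the decrease; combining this with a secondary normal-weight complexity one gets that the process terminates at a locally helical surface $H'$, isotopic to $H$, on which no unwinding or cancellation move is available. If the total absolute twisting $T$ of $H'$ is $0$ we are done, so assume $T\geq 1$.

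Next comes the bound. For each helical piece $H_*$ of $H'$ with $t(H_*)\neq 0$, the blocked moves supply $|t(H_*)|$ nested compressing disks of $H'$ (one per loop of the spiral), all on the side of $H'$ prescribed by the sign of $t(H_*)$; summing over the helical pieces gives $T$ compressing disks in total. The heart of the argument is to show, using that distinct helical pieces lie in distinct tetrahedra and that the within-piece disks are nested, that these disks (together with a companion family realizing the ``other direction'' of each compression) can be simultaneously realized in $H'$ disjointly, so that they span a subcomplex of the disk complex $\mathcal{D}(H')$ that is essential precisely because $H'$ is topologically minimal --- this is the feature that, by the definition of topological index from \cite{TopIndexI}, forces $\mathrm{index}(H')\geq T$. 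Since the index is an isotopy invariant, $\mathrm{index}(H')=\mathrm{index}(H)=n$, and therefore $T\leq n$, as claimed.

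The main obstacle is exactly the local-to-global step in the previous paragraph: verifying that blocked unwindings really do yield embedded compressing disks, that the disks from different helical pieces and from different loops of a single piece can be realized simultaneously disjointly and on the correct sides, and --- most delicately --- that the cycle they bound in $\mathcal{D}(H')$ represents a nontrivial homotopy class of the appropriate dimension rather than being filled in. This is the point at which topological minimality of $H$ is genuinely used, and where the fine combinatorics of helicoid pieces in \cite{TopMinNormalII} (the axis, the handedness, and the way a single twist distributes itself among the four pairs of opposite edges of a tetrahedron) must be brought to bear to get the homotopy-theoretic bookkeeping exactly right. A secondary technical point is the termination argument, which requires a careful choice of complexity so that unwinding and cancellation together strictly decrease it.
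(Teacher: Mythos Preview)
The paper does not contain a proof of Theorem~\ref{t:BoundedLength}. The theorem is stated in the introduction as a consequence of results proved elsewhere: the sentence immediately preceding it reads ``The results of \cite{TopMinNormalI} and \cite{TopMinNormalII}, taken together, imply the following,'' and no further argument is given here. The present paper is devoted entirely to Theorem~\ref{t:mainthm}, whose proof occupies Sections~2--5 and is logically independent of Theorem~\ref{t:BoundedLength}. So there is no in-paper proof to compare your proposal against; any genuine comparison would have to be made with the arguments in \cite{TopMinNormalI} and \cite{TopMinNormalII}.

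That said, your sketch has a real gap at the step you yourself flag as ``most delicate.'' The topological index of $H'$ is, by definition, one more than the smallest $k$ for which $\pi_k(\mathcal{D}(H'))$ is nontrivial. Producing $T$ pairwise disjoint compressing disks, even on prescribed sides, gives you a $(T-1)$-simplex in $\mathcal{D}(H')$, and a simplex is contractible: it contributes nothing to the homotopy groups and hence gives no lower bound on the index. What you would need is a nontrivial class in $\pi_{T-1}(\mathcal{D}(H'))$, i.e.\ a $(T-1)$-sphere of compressing disks that cannot be filled in, and nothing in your construction supplies the obstruction to filling. The actual arguments in \cite{TopMinNormalI} and \cite{TopMinNormalII} run in the opposite direction: one starts from the index-$n$ hypothesis, which (via an iterated weak-reduction / sweepout argument) constrains the local complexity one must allow when normalizing, rather than building up homotopy from the helicoids after the fact. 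Your termination scheme is also underspecified: an ``unwinding move'' that pushes a twist into an adjacent tetrahedron need not decrease total absolute twisting, and you have not exhibited a secondary complexity that does decrease.
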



The results mentioned above give a direct generalization of Haken's normalization of incompressible surfaces \cite{haken:68}. To see this, first note that by definition, an incompressible surface is index 0. By Theorem \ref{Global2Local} such a surface can be isotoped to be locally topologically minimal. By incompressibility, we may assume that in this position it is locally simply connected. Finally, by Theorem \ref{t:BoundedLength} we conclude that the total absolute twisting must be 0, which means that it is a collection of triangles and quadrilaterals.

For higher index locally helical surfaces, the situation may be more complicated, as there may be helicoids distributed across tetrahedra in $M$. The main result of this paper is the following theorem, which says that the total absolute twisting outside of some prescribed set of tetrahedra $\Delta$ constrains the net twisting inside $\Delta$.

\begin{thm}
\label{t:mainthm}
Let $M$ be a closed, oriented, triangulated 3-manifold, and let $\Delta$ be a set of tetrahedra in the triangulation of $M$. Let $H$ be a locally helical surface in $M$ such that the total absolute twisting  of $H -\Delta$ is at most $n$. Then the net twisting of $H \cap \Delta$ is bounded, where the bound depends only on $M$ and $n$.
\end{thm}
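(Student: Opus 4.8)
The strategy is to show that the net twisting of a locally helical surface behaves like a conserved flux: twisting ``flows'' between adjacent tetrahedra, is conserved away from a bounded collection of local sources and sinks, and so the net amount of it trapped inside $\Delta$ is controlled by how much can be generated in $M-\Delta$, which the hypothesis bounds by $n$.

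To set this up I would work edge by edge. Fix an edge $e$ of the triangulation and let $T_1,\dots,T_k$ be the cyclically ordered tetrahedra meeting $e$, with $F_i = T_i\cap T_{i+1}$ the faces containing $e$. On each $F_i$, after subtracting the baseline contribution of the triangle and quadrilateral pieces (which is determined locally by normal coordinates), let $c_i$ be the number of remaining normal arcs of $H$ running parallel to $e$. Using the axis construction of Section~\ref{s:axis}, the helical pieces of $H$ that wind around $e$ are exactly those sitting in some $T_j$ and having $e$ as one of their two axis edges, and the first and central lemma is that the sum of the twistings of these pieces in $T_j$ is recorded, up to an additive error bounded in terms of the valence of $e$, by the arc counts $c_{j-1}$ and $c_j$ on the two faces of $T_j$ containing $e$. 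Crucially, because $H$ is embedded each $c_i$ is a single well-defined integer shared by the two tetrahedra on either side of $F_i$, which forces the twisting contributions around $e$ to match across faces and reduces a statement about winding to one about these boundary integers.

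With this in hand, the conservation law follows: grouping the tetrahedra meeting $e$ into maximal runs contained in $\Delta$ and the complementary runs contained in $M-\Delta$, the matching of the arc counts $c_i$ across the faces separating consecutive runs forces the total twisting around $e$ carried by the runs inside $\Delta$ to equal the negative of that carried by the runs outside $\Delta$, up to an error of size $O(\mathrm{val}(e))$. Summing over the finitely many edges of the triangulation --- and using that each helical piece winds around at most two edges, that $\sum_e \mathrm{val}(e)$ depends only on the triangulation, and that the total absolute twisting of $H-\Delta$ is at most $n$ --- bounds the net twisting of $H\cap\Delta$ by $n$ plus a constant depending only on $M$, which is the desired conclusion.

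The main obstacle is the central lemma relating winding to the boundary arc counts: one must define the ``baseline'' coming from triangles and quadrilaterals precisely enough that the residual counts $c_i$ genuinely see only the helical twisting, and one must control the error uniformly --- so that it depends on the combinatorics of the triangulation near $e$ and not on the (unbounded) number of helical pieces of $H$ stacked in a single tetrahedron --- even when a tetrahedron contains many nested helicoids or helicoids with different axes. This is exactly the point at which the structure theory of helical pieces from \cite{TopMinNormalII} and the axis machinery of Section~\ref{s:axis} must be brought to bear. A secondary technical issue is sign bookkeeping: one must use the orientation of $M$ to orient ``winding around $e$'' so that it is the signed net twistings, rather than the total absolute twistings, that satisfy the conservation law, and then check that the hypothesis on $H-\Delta$, which is stated for the total absolute twisting, still suffices to bound the relevant signed quantity.
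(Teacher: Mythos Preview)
Your approach is genuinely different from the paper's, and the gap you yourself flag---the ``central lemma'' relating twisting to face--arc counts---is real and does not obviously close.

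The paper never tries to read the signed twisting of a single surface off its normal coordinates. Instead it argues by comparison. Two locally helical surfaces are declared \emph{consistent} in $(M;\Delta)$ if they are compatible outside~$\Delta$ and have the same axis and handedness in every $\sigma\in\Delta$. The key Lemma~\ref{mainlem} shows that any two consistent surfaces $H,G$ have \emph{equal} net twisting in $\Delta$: one computes $\sum_{\sigma\in\Delta}\eta_\sigma(\bdy H_\sigma\cap\bdy G_\sigma)$ in two ways, using Lemma~\ref{l:NumTurns} to identify each summand with $2\bigl(t(H_\sigma)-t(G_\sigma)\bigr)$, and using cancellation on interior faces of $\Delta$ together with Lemma~\ref{l:NetZero} on $\bdy\Delta$ to see that the whole sum vanishes. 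The theorem then follows because the bound on total absolute twisting outside $\Delta$ leaves only finitely many compatibility classes there, and there are only finitely many axis/handedness choices inside $\Delta$, so finitely many consistency classes in all. Thus the sign information is carried by a \emph{pairing of two surfaces}, not by the arc counts of one.

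Your scheme, by contrast, needs the signed twisting to be visible in the $c_i$, and a direct computation shows the obstacle. For a single helicoid of twisting $\pm n$ with axis through $e$, the number of arcs parallel to $e$ on \emph{each} of the two faces through $e$ is exactly $n$; so $c_{j-1}=c_j$, the difference carries no information, and the value itself is $|t|$ rather than $t$. Moreover, since all long boundary loops in a given tetrahedron are parallel (Lemma~\ref{l:Rotation}), the arc pattern on a shared face forces $|t|$ to be constant along the entire edge--cycle, so any telescoping collapses trivially rather than producing the signed conservation law you want. To rescue the idea you would have to replace $c_i$ by a signed combination of two arc types on each face and then track how the labeling twists as you pass from $T_j$ to $T_{j+1}$ around $e$; this is essentially rebuilding, one edge at a time, the same handedness bookkeeping that the paper handles globally and painlessly via the intersection pairing with a second surface.
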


Three corollaries of this theorem are worth noting: where $\Delta$ is a single tetrahedron of $M$, where $\Delta$ is exactly two tetrahedra, and where $\Delta$ is the set of all tetrahedra in $M$.

\begin{cor}
Let $M$ be a closed, oriented, triangulated 3-manifold, and let $\Delta$ be a tetrahedron of the triangulation. Let $H$ be a locally helical surface in $M$ such that the total absolute twisting of $H -\Delta$ is at most $n$. Then the total absolute twisting of $H$ is bounded, where the bound depends only on $M$ and $n$.
\end{cor}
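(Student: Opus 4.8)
The plan is to derive the corollary from Theorem~\ref{t:mainthm} by adjoining to it a single structural fact about helical pieces that share a tetrahedron. List the helical pieces of $H$ contained in $\Delta$ as $H_1,\dots,H_k$ (the triangular and quadrilateral pieces contribute nothing to twisting and may be ignored). Since every helical piece of $H$ lies in exactly one tetrahedron, the helical pieces are partitioned into those in $\Delta$ and those in the other tetrahedra, so the total absolute twisting of $H$ equals the total absolute twisting of $H-\Delta$ plus $\sum_{i=1}^{k}|t(H_i)|$. By hypothesis the first summand is at most $n$, so it suffices to bound $\sum_i |t(H_i)|$ in terms of $M$ and $n$. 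Theorem~\ref{t:mainthm}, applied with our single tetrahedron playing the role of $\Delta$, already bounds the \emph{net} twisting: $\bigl|\sum_i t(H_i)\bigr|\le B$ for some $B=B(M,n)$. Thus the only content of the corollary is the passage from this net sum to the sum of absolute values, and for that it is enough to know that $t(H_1),\dots,t(H_k)$ all have the same sign.

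To see this I would argue in two stages. First, that $H_1,\dots,H_k$ share a common axis: two disjoint helicoids in a tetrahedron with distinct axes would be forced to intersect by the same face-by-face bookkeeping that shows two normal quadrilaterals of different types must intersect (forget the twisting and track only the normal arcs the surfaces cut on the four faces of $\Delta$). This is implicit in the local classification of pieces in~\cite{TopMinNormalII}; I would either quote it or reproduce the short argument. Second, fix the common axis $\alpha$: the sign of $t(H_i)$ records the direction in which $H_i$ winds around $\alpha$. In coordinates near an edge of $\alpha$ in which $\theta$ is the angular coordinate around the edge (measured in turns) and $z$ the coordinate along it, $H_i$ appears, roughly, as part of the graph $z=t(H_i)\,\theta + b_i$ over an interval of angles of width comparable to $1+|t(H_i)|$. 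If some $H_i$ and $H_j$ had twisting of opposite signs, these two graphs — having slopes of opposite sign over a common angular interval — would have to cross, forcing $H_i\cap H_j\neq\emptyset$ and contradicting disjointness. Hence all the nonzero $t(H_i)$ share a sign.

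With this in hand the bookkeeping closes: $\sum_i |t(H_i)| = \bigl|\sum_i t(H_i)\bigr| \le B$, so the total absolute twisting of $H$ is at most $B+n$, a bound depending only on $M$ and $n$, as claimed.

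I expect the structural step — that disjoint helical pieces of a single tetrahedron have a common axis and wind around it in the same direction — to be the only real work; the remainder is Theorem~\ref{t:mainthm} together with arithmetic. If \cite{TopMinNormalII} already records the possible simultaneous configurations of several helicoids in one tetrahedron, this step is just a citation; if not, it is the one ingredient needing a proof from scratch, and some care is required to make the ``opposite winding forces intersection'' argument rigorous — for instance by counting the intersections of the $H_i$ with a meridian disk of $\alpha$ as that disk is slid along $\alpha$, and observing that opposite senses of rotation of these finite point sets force a collision.
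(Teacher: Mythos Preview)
Your approach matches the paper's: invoke Theorem~\ref{t:mainthm} to bound the net twisting in the single tetrahedron $\Delta$, then observe that in one tetrahedron a net-twisting bound is already an absolute-twisting bound. The paper's proof is literally that one sentence, with no justification of the observation. Your two-stage argument (common axis, then common winding direction) is correct in outline but more work than needed, and you rightly flag the second stage as not yet rigorous. A cleaner route: any two disjoint normal loops of length $\ge 4$ on $\partial\sigma$ are normally \emph{parallel}---they lift to disjoint, hence parallel, essential curves on the torus double branched cover used in Lemma~\ref{l:Rotation}, so they have identical edge-intersection numbers---and therefore all helical components of $H\cap\Delta$ have \emph{identical} twisting, not merely the same sign. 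This replaces your heuristic angular-coordinate argument with a one-line appeal to standard normal-curve combinatorics already present in the paper.
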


This follows since a bound on the net twisting of a surface in a single tetrahedron serves as a bound for its absolute value. 

\begin{cor}
Let $M$ be a closed, oriented, triangulated 3-manifold, and let $\Delta_1, \Delta_2$ be a pair of tetrahedra in the triangulation of $M$. Let $H$ be a locally helical surface in $M$ such that the total absolute twisting of $H -(\Delta_1 \cup \Delta_2)$ is at most $n$. Then $t(H \cap \Delta_1)=-t(H \cap \Delta_2)+m$, where $m$ is bounded by a function of  $M$ and $n$. 
\end{cor}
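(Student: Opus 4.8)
\emph{Proof proposal.} The plan is to deduce this directly from Theorem \ref{t:mainthm} by taking $\Delta$ to be the set $\{\Delta_1, \Delta_2\}$. First I would check that the hypotheses match: by definition $H - \Delta = H - (\Delta_1 \cup \Delta_2)$, so the assumption that the total absolute twisting of $H - (\Delta_1 \cup \Delta_2)$ is at most $n$ is precisely the hypothesis of Theorem \ref{t:mainthm} for this choice of $\Delta$. The theorem then supplies a constant $C = C(M,n)$, depending only on $M$ and $n$, bounding the net twisting of $H \cap \Delta = H \cap (\Delta_1 \cup \Delta_2)$.

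Next I would invoke additivity of net twisting over a set of tetrahedra, which is immediate from Definition \ref{d:NetTwisting}: every helical piece of a locally helical surface lies in a single tetrahedron, so each helical piece of $H$ in $\Delta$ belongs to exactly one of $\Delta_1$ or $\Delta_2$, and summing the signed twists over all of them factors as $t(H \cap \Delta) = t(H \cap \Delta_1) + t(H \cap \Delta_2)$. Setting $m := t(H \cap \Delta_1) + t(H \cap \Delta_2)$ and rearranging yields the asserted identity $t(H \cap \Delta_1) = -t(H \cap \Delta_2) + m$, with $|m| = |t(H \cap \Delta)| \le C(M,n)$.

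Since all of the substantive work is carried by Theorem \ref{t:mainthm}, I do not anticipate a genuine obstacle here; the only point requiring care is the bookkeeping when $\Delta_1$ and $\Delta_2$ are glued along one or more common faces (including the degenerate case in which they are identified copies of a single tetrahedron of $M$). There one should note that a helical piece cannot straddle such a shared face, since by the characterization of locally helical surfaces each piece is contained in the interior of a single tetrahedron, so the collections of helical pieces in $\Delta_1$ and in $\Delta_2$ remain disjoint and the additivity used above is unaffected.
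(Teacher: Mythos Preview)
Your proposal is correct and matches the paper's approach: the corollary is stated without a separate proof, since it is the direct specialization of Theorem~\ref{t:mainthm} to $\Delta=\{\Delta_1,\Delta_2\}$ together with the obvious additivity $t(H\cap\Delta)=t(H\cap\Delta_1)+t(H\cap\Delta_2)$. Your remarks about shared faces are fine but unnecessary, as the pieces of a locally helical surface are by definition indexed by tetrahedra.
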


In other words, if, in a sequence of surfaces with bounded total absolute twisting outside of $\Delta_1 \cup \Delta_2$, the number of left-handed twists in $\Delta_1$ is growing, then so must be the number of right-handed twists in $\Delta_2$. This brings to light a striking resemblance between topologically minimal surfaces and geometrically minimal surfaces, as described by Colding and Minicozzi in the following theorem: 

\begin{qthm}[\cite{cmpnas}]
Any nonsimply connected embedded minimal planar domain without small necks can be obtained from gluing together two oppositely oriented double spiral staircases. Moreover, if for some point the curvature is large, then the separation between the sheets of the double spiral staircases is small. Note that because the two double spiral staircases are oppositely oriented, then one remains at the same level if one circles both axes.
\end{qthm}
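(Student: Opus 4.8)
The plan is to follow the Colding--Minicozzi structure program: analyze $\Sigma$ near its points of large curvature, extract the local helicoidal model there, and then assemble the global picture using the compactness theory together with the planar topology. Let $\Sigma$ be a complete, embedded, non-simply-connected minimal planar domain in $\mathbb{R}^3$ satisfying the no-small-necks hypothesis. The first step is a blow-up (rescaling) analysis: rescale $\Sigma$ about a sequence of points where $|A|^2$ is comparable to its local supremum, and invoke the one-sided curvature estimate together with the compactness/lamination theorem for sequences of embedded minimal disks. The no-small-necks hypothesis is what guarantees that these rescalings remain embedded disks on larger and larger scales, so that the rescaled limit is a (singular) minimal lamination whose leaves are parallel planes, with singular set a collection of lines transverse to the leaves---the \emph{axes}.

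The second step is to identify the local model at each axis. By the multi-valued graph theorem (Colding--Minicozzi I--IV), wherever the curvature is large the surface decomposes into a pair of interleaved multi-valued graphs spiraling around a nearly vertical axis; embeddedness forces these to assemble into a double spiral staircase modeled on a helicoid. The sheet-separation estimate from the same theory yields the quantitative claim of the second sentence: the vertical separation $w$ between consecutive sheets is controlled by the curvature, so that large curvature at a point forces small separation there. This is precisely the ``large curvature $\Rightarrow$ small separation'' assertion.

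The third step is the global topological constraint. Since $\Sigma$ has genus $0$ and is not simply connected, it has at least two ends, hence at least two axes about which it spirals; the no-small-necks hypothesis, via a monotonicity and chord-arc argument, rules out the infinitely-necked Riemann-type configurations and pins the count to exactly two. The crucial point is that embeddedness forces the spiraling about the two axes to have \emph{opposite} handedness: if both staircases turned the same way, the sheets emanating from the two axes would be forced to cross. Consequently the net vertical rise accumulated along a loop encircling both axes cancels, which is the statement that ``one remains at the same level if one circles both axes.''

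The final step is to glue. Decompose $\Sigma$ into the two double-spiral-staircase regions surrounding the two axes together with a compact transition region, and verify via the chord-arc bounds that this transition is a controlled neck joining the oppositely oriented staircases. I expect the main obstacle to be exactly this gluing, together with the count of axes: showing that the locally defined multi-valued graphs extend to \emph{global} double spiral staircases, and that $\Sigma$ is exhausted by precisely two of them, requires the full strength of the compactness theory, the one-sided curvature estimate (to propagate the sheets outward), and the no-small-necks hypothesis (to exclude additional axes and degenerations). The separation--curvature estimate, by contrast, falls out directly from the multi-valued graph bounds once the local model is established.
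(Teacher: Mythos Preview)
The paper does not prove this theorem at all. The statement appears in the introduction purely as a quoted result of Colding and Minicozzi \cite{cmpnas}, offered to highlight an analogy between their geometric minimal surface theory and the paper's combinatorial result about oppositely twisting helicoids in distinct tetrahedra. There is accordingly no ``paper's own proof'' to compare against.

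Your outline is a reasonable high-level sketch of the Colding--Minicozzi program as it appears in their series of papers, but it is not relevant here: nothing in the present paper develops or relies on curvature estimates, multi-valued graph theorems, lamination limits, or any analytic minimal surface theory. If the assignment was to compare with this paper's proof, the correct response is simply that the paper cites the result without proof.
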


The last corollary of Theorem \ref{t:mainthm} is when $\Delta$ is the set of all tetrahedra in $M$. In this case, our result makes no mention of total absolute twisting. 

\begin{cor}
Let $M$ be a closed, oriented, triangulated 3-manifold, and let $H$ be a locally helical surface in $M$. Then the net twisting of $H$ is bounded, where the bound depends only on $M$.
\end{cor}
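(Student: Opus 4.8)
The plan is to deduce this corollary as the special case of Theorem~\ref{t:mainthm} obtained by taking $\Delta$ to be the set of \emph{all} tetrahedra in the triangulation of $M$. I would first observe that with this choice the complement of $\Delta$ is (a regular neighborhood of) the $2$-skeleton, which contains no open tetrahedron. Hence $H - \Delta$ contains no helical pieces at all --- indeed no two-dimensional pieces --- so its total absolute twisting is the empty sum, namely $0$. We may therefore take $n = 0$ in the hypothesis of Theorem~\ref{t:mainthm}.

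Applying Theorem~\ref{t:mainthm} with this $\Delta$ and this $n$ then gives immediately that the net twisting of $H \cap \Delta = H$ is bounded by a quantity depending only on $M$ and on $n = 0$, i.e.\ by a quantity depending only on $M$. That is precisely the conclusion of the corollary.

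Since this is a direct specialization, I do not expect a genuine obstacle here: all of the substance is contained in Theorem~\ref{t:mainthm} itself, and the only remaining points are bookkeeping. Specifically, one should check that ``total absolute twisting of $H - \Delta$'' is, in the sense of Definition~\ref{d:NetTwisting} and the surrounding discussion, a sum taken over exactly those helical pieces of $H$ lying outside $\Delta$ (so that it genuinely vanishes when $\Delta$ is all of $M$), and that the bound furnished by Theorem~\ref{t:mainthm} is defined, and hence finite, at the value $n = 0$. Both are immediate from the statements already given, so the corollary follows with no further work.
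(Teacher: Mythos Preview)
Your argument is correct and is exactly the specialization the paper intends: the corollary is stated immediately after the remark that it is the case of Theorem~\ref{t:mainthm} in which $\Delta$ is the set of all tetrahedra, so that $H-\Delta$ is empty and one may take $n=0$. No further work is needed.
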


In the next section we characterize normal curves by their {\it type}. In Section \ref{s:axis} we characterize helical disks by their {\it axis}. Finally, in Section \ref{s:compatibility} we define the {\em compatibility class} of a locally helical surface. Those familiar with normal surface theory will find several of these notions familiar. The layering of these definitions parses the set of locally helical surfaces in $(M;\Delta)$ more and more finely, imposing increasingly greater restrictions on how surfaces in the same class can intersect. Taken all at once, these characterizations produce a finite set of {\em consistency classes} for locally helical surfaces in $(M; \Delta$), which have just the properties needed to prove Theorem~\ref{t:mainthm}.

\section{The {\it type} of a normal curve on a tetrahedron.}

In this section we consider the combinatorics of normal loops on the boundary of a tetrahedron. For a basic reference on normal surface theory, we refer the reader to \cite{hass:98}. 

\begin{lem}
\label{l:Rotation}
Let $\sigma$ be a tetrahedron, and $\alpha$ a normal loop of length at least four on $\bdy \sigma$. Let $\phi$ denote a 180 degree rotation of $\sigma$ about a line connecting the midpoints of opposite edges of $\sigma$. Then $\alpha$ is normally parallel to a loop that is preserved by $\phi$. 
\end{lem}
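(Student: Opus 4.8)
The idea is to understand normal loops of length at least four on $\bdy\sigma$ concretely in terms of their intersections with the six edges of $\sigma$, and then to show that a normal loop can always be slid to a ``balanced'' position that is symmetric under $\phi$. Recall that a normal loop on $\bdy\sigma$ meets each triangular face in a collection of normal arcs, each of which cuts off a single vertex of that face; thus the combinatorics of $\alpha$ is completely recorded by the edge-weight vector $(a_e)_{e}$, where $a_e$ is the number of intersections of $\alpha$ with the edge $e$. First I would recall (or re-derive) the standard matching conditions these weights must satisfy: within each face, the three corner-arc counts are determined by the three edge weights of that face, and consistency forces, for each face, that the sum of the two larger edge-weights is at least the third, etc. In particular opposite edges play symmetric roles, and the combinatorial type of $\alpha$ is invariant under the involution $\phi$ on the set of edges (since $\phi$ swaps the two edges in each of the three ``opposite'' pairs, and permutes the faces).

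Next I would set up the rotation $\phi$ explicitly: pick the axis through the midpoints of the opposite edge pair $\{e_0, e_0'\}$ along which $\alpha$ has the minimal weight among the three pairs (this choice will make the symmetrization cleanest). The map $\phi$ fixes the midpoints of $e_0$ and $e_0'$, swaps the other two pairs of opposite edges, and swaps faces in pairs. Because $\alpha$ and $\phi(\alpha)$ have the same edge-weight vector and the same normal-arc types in corresponding faces, they are both built from the same ``allowed'' local pieces; the plan is to show any two normal loops with the same combinatorial type are normally parallel once we are allowed to slide arcs along the faces. Concretely, a normal loop of length $\geq 4$ on $\bdy\sigma$ is determined up to normal isotopy by its edge-weight vector provided we also record a finite amount of ``cabling'' data around each edge; I would argue that this cabling data can always be redistributed symmetrically, so that the resulting loop is literally invariant under $\phi$.

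The cleanest way to organize the last step is probably by an explicit normal form. On the boundary of a tetrahedron, a length-$\geq 4$ normal loop either (i) runs ``parallel to an edge'' — a loop meeting one pair of opposite edges once each and the other four edges not at all, pushed off, which after stacking gives curves of any even length meeting two opposite edges and missing the complementary pair — or (ii) is a ``triangle-type'' loop linking a single vertex. Case (ii) produces loops of length $3$, so for length $\geq 4$ only case (i) and its multiples occur, and such a loop is visibly normally parallel to one symmetric about the axis through the midpoints of the two edges it misses (for a single strand) or about the axis through the pair it meets (taking the midpoint-to-midpoint arcs through each face). Either way one exhibits the parallel representative and checks $\phi$-invariance directly from the picture. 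The main obstacle I anticipate is the bookkeeping in classifying length-$\geq 4$ normal loops on $\bdy\sigma$ — making precise that, unlike on a once-punctured torus, there is essentially a unique isotopy class of such loops of each length, so that matching the combinatorial type to $\phi(\alpha)$ really does yield a normal parallelism; once that classification is pinned down, the symmetry statement is immediate.
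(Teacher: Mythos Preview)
Your skeleton is the same as the paper's --- record $\alpha$ by its edge-weight vector, show that $\phi$ preserves this vector, and conclude normal parallelism because edge weights determine a normal curve up to normal isotopy --- but you skip the one nontrivial step. You assert that ``$\alpha$ and $\phi(\alpha)$ have the same edge-weight vector'' and that ``opposite edges play symmetric roles'', as if this dropped out of the face-by-face matching conditions. It does not: the matching conditions only say that on each face the three edge weights satisfy parity and triangle-inequality constraints; they do \emph{not} force the two edges in an opposite pair to carry equal weight. That equality genuinely uses that $\alpha$ is a single loop of length $\geq 4$, and it is exactly the content the paper supplies --- there via the observation that the double cover of $\bdy\sigma$ branched over the vertices is a torus, on which opposite edges lift to parallel essential loops and $\alpha$ lifts to an essential loop, forcing equal intersection numbers. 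Until you prove this (by the branched-cover argument or some direct combinatorial substitute), the $\phi$-invariance of the edge-weight vector is unjustified and the rest of the argument does not start.

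Separately, your fallback ``explicit normal form'' classification is not right. A curve ``meeting one pair of opposite edges once each and the other four edges not at all'' does not satisfy the matching conditions (two of the faces would see a single endpoint), so case~(i) as stated is empty; the actual length-$4$ loops (quadrilaterals) meet four edges and miss one opposite pair, and longer helicoid boundaries meet all six edges. So that dichotomy would not recover the lemma either. Finally, the talk of extra ``cabling data'' is unnecessary: on $\bdy\sigma$ the edge-weight vector already determines the normal isotopy class of the curve system, which is what the paper uses to finish once the opposite-edge equality is in hand.
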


\begin{proof}
To begin, we claim that a normal loop of length at least four meets each pair of opposite edges of $\bdy \sigma$ in the same number of points. One way to see this is by noting that the double cover of $\bdy \sigma$, branched over the vertex set, is a torus (see Figure~\ref{f:Cover_of_a_tet}). Each edge of $\bdy \sigma$ lifts to an essential loop on the torus, and each pair of opposite edges lifts to two parallel loops. Now, as a loop $\alpha$ of length at least four on $\bdy \sigma$ also lifts to two essential loops on the torus, it must be the case that $\alpha$ intersects opposite edges of $\bdy \sigma$ in an equal number of points. 

\begin{figure}
\[\includegraphics[width=5 in]{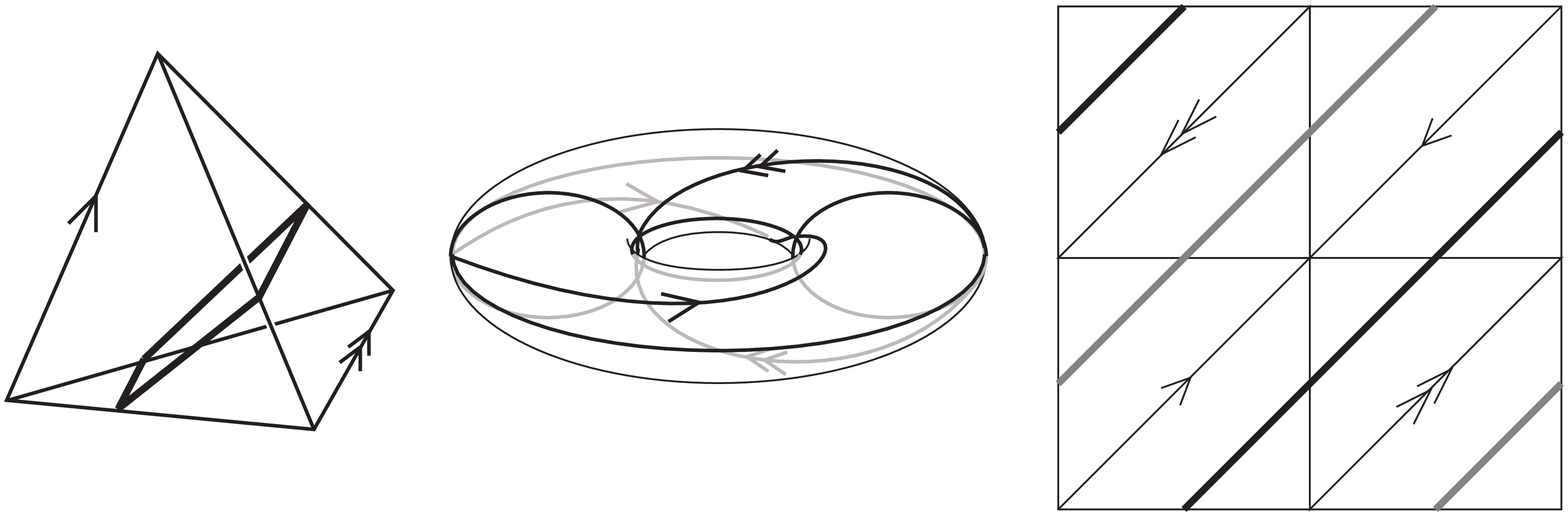}\]
\caption{The torus as a double branched cover of the boundary of a tetrahedron, and components of a lift of a length four curve in its unfolded version.}
\label{f:Cover_of_a_tet}
\end{figure}

Now note that the rotation $\phi$ preserves the two edges that its axis intersects, and swaps the other two pairs of opposite edges. Hence, both $\alpha$ and $\phi(\alpha)$ will meet each edge in the same number of points. As these numbers completely determine the intersection of $\alpha$ with each face of $\sigma$ (up to normal isotopy), the result follows. 
\end{proof}

This lemma gives us a way to classify normal curves on the boundary of a tetrahedron. Label the normal arc types on each face of a tetrahedron $\sigma$ as in Figure \ref{f:ArcTypes}. These labels are arranged so as to be preserved by 180 degree rotations about axes that connect the midpoints of opposite edges. Any normal loop $\alpha$ of length at least four on $\bdy \sigma$ meets each face in a collection of normal arcs. By Lemma \ref{l:Rotation}, the number of these arcs that are parallel to an arc with one label in one face will be the same as the number that are parallel to an arc with the same label in any other face. Hence, if we fix one face $\delta$ of $\sigma$ and let  $a(\alpha)$, $b(\alpha)$ and $c(\alpha)$ be the number of arcs of $\alpha \cap \delta$ parallel to the labelled arcs $a$, $b$, and $c$ of the figure, then these three functions will be independent of the choice of $\delta$. 

\begin{figure}
\psfrag{a}{$a$}
\psfrag{b}{$b$}
\psfrag{c}{$c$}
\[\includegraphics[width=3in]{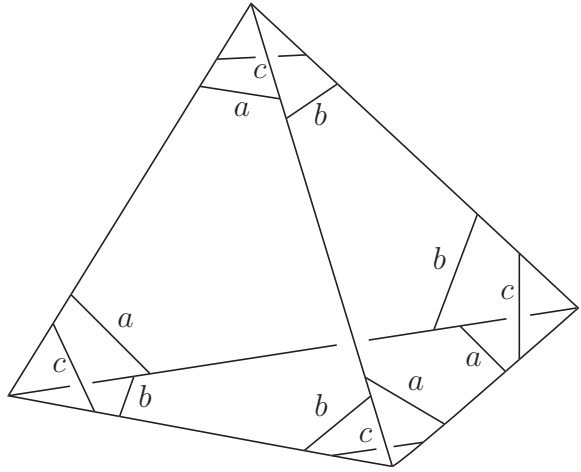}\]
\caption{Labeling the normal arc types on the boundary of a tetrahedron, $\sigma$.}
\label{f:ArcTypes}
\end{figure}

Note furthermore that for any loop $\alpha$ of length at least four, at least one of the three numbers $a(\alpha)$, $b(\alpha)$ or $c(\alpha)$ will be zero (otherwise $\alpha$ would have length three components). This motivates the following definition. 

\begin{dfn}
\label{d:type}
Let $\sigma$ be a tetrahedron with labeled normal arc types as in Figure~\ref{f:ArcTypes}, and let $\alpha$ be a normal loop on $\bdy \sigma$ of length at least four. We say $\alpha$ is {\it type $a$}, if $a(\alpha)=0$. Define {\it type $b$} and {\it type $c$} similarly. 
\end{dfn}

Note that normal loops of length exactly four will be of two types. The notion of type constrains how two normal curves can intersect on the boundary of a tetrahedron, as seen in the following two lemmas. 

\begin{lem}
\label{l:NoLengthThree}
Let $\alpha$ and $\beta$ be normal loops of length at least four on $\bdy \sigma$ of the same type. Let $\alpha+\beta$ be the normal loop(s) obtained by resolving all intersection points. Then $\alpha + \beta$ does not contain any components of length three.
\end{lem}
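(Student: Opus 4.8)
The plan is to track the number of type-$a$ arcs through the resolution: I will show that if $\alpha$ and $\beta$ are both type $a$, then $\alpha+\beta$ has no type-$a$ arc on any face of $\sigma$, and then that every length-three normal loop on $\bdy\sigma$ contains a type-$a$ arc (in fact one arc of each type).

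For the first step I would use that, on a fixed face $\delta$ of $\sigma$, the number of arcs of a normal multicurve meeting $\delta$ with a given label of Figure~\ref{f:ArcTypes} is a fixed (label-dependent) linear combination of the three numbers $|\,\cdot\,\cap e|$, $e$ an edge of $\delta$: writing $x_u$ for the number of arcs of $\delta$ cutting off the vertex $u$ of $\delta$, one has $|\,\cdot\,\cap e| = x_u + x_w$ for the two endpoints $u,w$ of $e$, so each $x_u$ is recovered with coefficients $\pm\tfrac12$. Since passing from $\alpha,\beta$ to $\alpha+\beta$ resolves crossings only in the interiors of faces, it leaves every edge-intersection number unchanged, so $|(\alpha+\beta)\cap e| = |\alpha\cap e| + |\beta\cap e|$ for each edge $e$; hence on every face the number of type-$a$ arcs of $\alpha+\beta$ is the sum of the corresponding numbers for $\alpha$ and $\beta$. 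By Lemma~\ref{l:Rotation} the quantity $a(\gamma)$ of a loop $\gamma$ of length at least four does not depend on the face used to compute it, so ``$\alpha$ and $\beta$ are type $a$'' means neither has a type-$a$ arc on \emph{any} face. Therefore $\alpha+\beta$ has no type-$a$ arc on any face of $\sigma$.

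For the second step I would recall that a length-three normal loop $T$ on $\bdy\sigma$ consists of three arcs lying on three distinct faces (it cannot use two arcs of one face, since a single pair of adjacent faces shares only one edge, leaving an endpoint with nothing to glue to), and that any three faces of $\sigma$ share exactly one vertex $v$. Tracing how the three arcs glue along the edges through $v$ — the remaining edge of each face, the one missing $v$, is shared only with the fourth face, which $T$ avoids — forces each arc to be the one cutting off $v$, i.e.\ $T$ is the vertex-linking triangle at $v$. Now the three edges of $\sigma$ missing $v$ contain exactly one edge from each pair of opposite edges (each pair has one edge through $v$ and one not), and for a face $\delta$ at $v$ the arc cutting off $v$ is precisely the arc of $\delta$ disjoint from the unique edge of $\delta$ that misses $v$. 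Since the labelling of Figure~\ref{f:ArcTypes} is, up to permuting the three labels, the unique labelling invariant under the rotations $\phi$ of Lemma~\ref{l:Rotation}, it must assign to an arc the opposite-edge pair containing the edge that arc misses; hence as $\delta$ runs over the three faces at $v$, the three arcs of $T$ receive the three distinct labels $a$, $b$, $c$. In particular $T$ contains a type-$a$ arc.

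Combining the two steps, a length-three component of $\alpha+\beta$ would contribute a type-$a$ arc on some face of $\sigma$, contradicting the first step; so $\alpha+\beta$ has no length-three component. The part I expect to demand the most care is the first: verifying honestly that the resolution preserves edge-intersection numbers (so that the type counts are additive) and that counting type-$a$ arcs face-by-face is legitimate even though the individual components of $\alpha+\beta$ need not have length at least four. The geometric identification of the arc types on the vertex-linking triangle is the conceptual heart but is a short, explicit check.
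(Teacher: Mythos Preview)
Your proposal is correct and follows essentially the same approach as the paper: both argue that the type-$a$ arc count is additive under resolution, conclude $a(\alpha+\beta)=0$, and then observe that any length-three (vertex-linking) loop must contain a type-$a$ arc. You have simply spelled out in detail the two steps the paper compresses into one-line assertions (additivity via edge-intersection numbers, and the fact that each vertex link uses one arc of each label).
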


\begin{proof}
Suppose $\alpha$ and $\beta$ are type $a$. Then $a(\alpha)=a(\beta)=0$. As $a(\alpha+\beta)=a(\alpha)+a(\beta)$ for any two normal loops, we conclude $a(\alpha+\beta)=0$. Thus, there is a missing arc type around each vertex of $\sigma$ (see Figure \ref{f:ArcTypes}). We conclude $\alpha+\beta$ does not have any components of length three.
\end{proof}

\begin{dfn}
Let $\alpha_0$ and $\beta_0$ be normal arcs in an oriented triangle $\delta$. Then $\alpha_0$ and $\beta_0$ can be isotoped, keeping their boundaries fixed, so that they intersect transversely in at most one point.  We define the {\it (normal) sign} of the point $\alpha_0 \cap \beta_0$, if it exists, as follows. Orient $\alpha_0$ and $\beta_0$ so that the ordering $(\alpha_0,\beta_0)$ agrees with the orientation of $\delta$. There are now two possibilities. If the regular exchange at $\alpha_0 \cap \beta_0$  attaches the tail of $\alpha_0$ to the tip of $\beta_0$ then we say intersection point $\alpha_0 \cap \beta_0$ is {\it positive.} Otherwise we say it is {\it negative} (see Figure \ref{f:NormalSwitches}). 
\end{dfn}

Note that with a fixed orientation on $\delta$, the sign of $\alpha_0 \cap \beta_0$ is opposite the sign of $\beta_0 \cap \alpha_0$.

\begin{figure}
\psfrag{a}{$\alpha_0$}
\psfrag{b}{$\beta_0$}
\psfrag{R}{Regular Exchange}
\psfrag{+}{$\alpha_0 \cap \beta_0$ positive}
\psfrag{-}{$\alpha_0 \cap \beta_0$ negative}
\[\includegraphics[width=4 in]{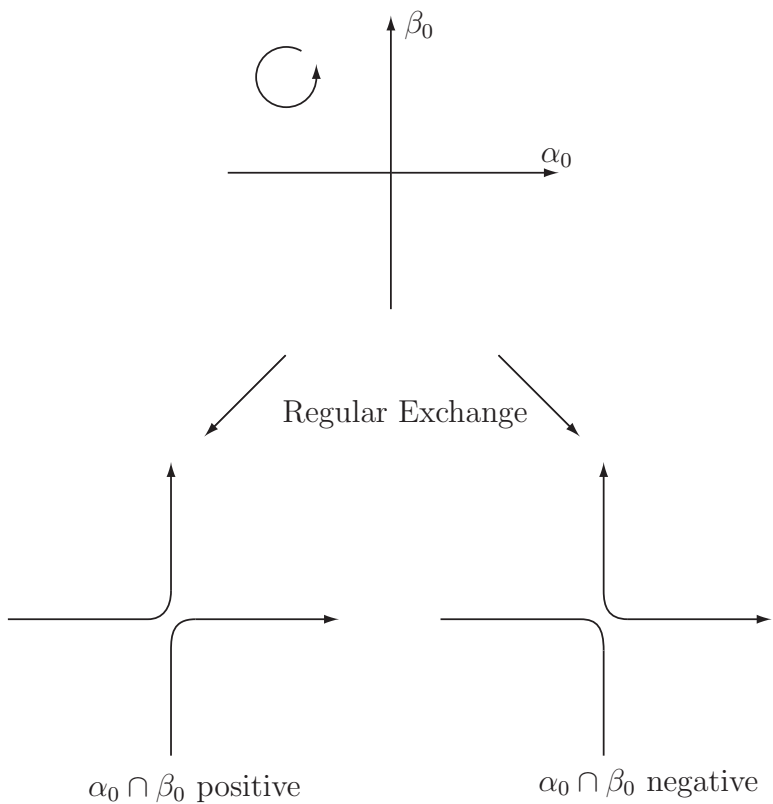}\]
\caption{The sign of $\alpha_0 \cap \beta_0$, as determined by the regular exchange.}
\label{f:NormalSwitches}
\end{figure}

\begin{lem}
\label{l:SameSign}
Let $\alpha$ and $\beta$ be collections of normal loops on $\bdy \sigma$ whose non-length three components are all of the same type, that have been normally isotoped to intersect minimally. Then each point of $\alpha \cap \beta$ has the same sign.
\end{lem}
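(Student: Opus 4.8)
The plan is to pass to the double branched cover of $\bdy\sigma$ over its four vertices, where $\bdy\sigma$ becomes a torus $T$ and the statement reduces to the standard fact that two families of parallel essential simple closed curves of distinct slopes on a torus, placed in minimal position, meet in points that all have the same sign. The first thing I would do is dispose of the length-three components. A length-three component of $\alpha$ (or of $\beta$) bounds a disk in $\bdy\sigma$ meeting the $1$-skeleton in a single vertex $v$, and so may be normally isotoped to lie in an arbitrarily small neighborhood of $v$; in particular it may be pushed off of every other component of $\alpha$ and of $\beta$. (In the language of the cover, its preimage in $T$ is a single inessential circle encircling the branch point over $v$, and such a circle can be isotoped off of all the essential curves and, equivariantly, off of the other inessential circles.) Since such components then contribute no points to $\alpha\cap\beta$, we may assume $\alpha$ and $\beta$ have no length-three components; say their common type is $a$, so that on every face their arcs are of types $b$ and $c$ only.

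Next I would set up the cover. Let $\pi\colon T\to\bdy\sigma$ be the double cover branched over the vertices, with $T$ a torus and deck involution $\iota$; note $\iota$ is orientation preserving and its four fixed points lie over the vertices. By the argument in the proof of Lemma~\ref{l:Rotation}, each component of $\alpha$ lifts to two disjoint essential simple closed curves on $T$, interchanged by $\iota$; since the components of $\alpha$ are themselves pairwise disjoint, all of $\pi^{-1}(\alpha)$ is a disjoint union of parallel essential curves, of some single slope $s$, and likewise $\pi^{-1}(\beta)$ is a union of parallel essential curves of some slope $s'$. If $s=s'$ then $\alpha$ and $\beta$ are unions of parallel copies of one and the same normal loop, which can be made pairwise disjoint by a normal isotopy, so $\alpha\cap\beta=\emptyset$ and there is nothing to prove. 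So assume $s\ne s'$. Then, orienting all curves of $\pi^{-1}(\alpha)$ coherently and all curves of $\pi^{-1}(\beta)$ coherently, every point of $\pi^{-1}(\alpha)\cap\pi^{-1}(\beta)$ has the same algebraic intersection sign, namely the sign of the algebraic intersection number of the two slopes.

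It remains to transfer this to $\bdy\sigma$, and this is the step I expect to be the main obstacle. The key claim is that the \emph{normal sign} of a point $p\in\alpha\cap\beta$ (as in the definition preceding the lemma) is determined by the algebraic sign of either of its two lifts on $T$, together with combinatorial data that is constant over all of $\alpha\cap\beta$. The normal sign is a local quantity depending on an orientation convention for the two crossing arcs within a face; restricted to a lift of a face, $\pi$ is an orientation-preserving homeomorphism of triangles, so this convention pulls back unchanged. Moreover the arc-type labeling of Figure~\ref{f:ArcTypes} was chosen to be invariant under the $180$-degree rotations of $\sigma$, which lift to symmetries of $T$; this is what forces the local normal-sign recipe to agree on all four faces of $\sigma$ and on both sheets of the cover, so that there is no monodromy obstruction to comparing it with the coherent global sign on $T$. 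Combining this with the torus computation above, every point of $\alpha\cap\beta$ has the same normal sign.

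Alternatively, one can try to argue entirely on $\bdy\sigma$ without the cover. On a fixed oriented face $\delta$, every arc of $\alpha\cap\delta$ and of $\beta\cap\delta$ is of type $b$ or type $c$, and arcs of a given type are mutually parallel, so an intersection point on $\delta$ occurs only between a $b$-arc and a $c$-arc, and there are just two conceivable local pictures, of opposite sign: an arc of $\alpha$ cutting off one of the two relevant vertices meeting an arc of $\beta$ cutting off the other, and the same picture with the roles of $\alpha$ and $\beta$ interchanged. The minimality hypothesis should rule out the coexistence of both pictures — roughly, their coexistence would permit reducing $|\alpha\cap\beta|$ by a normal isotopy supported near $\delta$, producing a bigon between $\alpha$ and $\beta$ that avoids the vertices — after which all intersection points on $\delta$ have one sign, and the rotation-invariance of the labeling again propagates this across the four faces. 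Either way, the heart of the matter is ruling out the two oppositely-signed crossing patterns, which is exactly where the minimality hypothesis must be used, and making that step precise (whether via a bigon argument on the sphere or via the sign-compatibility check in the cover) is the one genuinely delicate point.
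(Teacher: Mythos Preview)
Your torus-cover strategy is natural and the reductions (discarding length-three loops, lifting to parallel essential curves, minimal position upstairs) are fine. The gap is exactly where you flag it: you never establish that the \emph{normal} sign on $\bdy\sigma$ coincides, up to a global constant, with the algebraic intersection sign on $T$ under coherent orientations. These are genuinely different invariants. The normal sign at $p$ is determined by the ordered pair of arc types of $(\alpha_0,\beta_0)$ in its face; for type-$a$ loops there are exactly two local patterns, $(\alpha_0\ \text{type }b,\ \beta_0\ \text{type }c)$ and the reverse, and they carry opposite normal sign. The torus argument shows the algebraic sign is constant, but the coherent orientations on $\pi^{-1}(\alpha)$ do not descend to $\alpha$ (the hyperelliptic involution acts as $-1$ on $H_1(T)$, reversing them), so nothing you have written rules out both arc-type patterns occurring downstairs while all algebraic signs upstairs agree. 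Your appeal to rotation-invariance of the labeling only shows that the normal sign of a fixed pattern is the same in every face; it does not show that only one pattern occurs. Likewise, in your alternative face-local argument, the coexistence of both patterns in a single face does \emph{not} produce a bigon in that face: a $b$-arc and a $c$-arc can be disjoint in a triangle, so the four arcs $\alpha_1,\alpha_2,\beta_1,\beta_2$ need not form any local bigon.

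The paper avoids the cover entirely and argues by contradiction on $\bdy\sigma$. Assuming two points of $\alpha\cap\beta$ have opposite sign, one finds a subarc $\beta_0\subset\beta$ with interior disjoint from $\alpha$ joining two opposite-sign points. If $\bdy\beta_0$ lies on distinct parallel components of $\alpha$, one pushes $\beta_0$ into a single face and sees that the two regular exchanges at its ends produce a non-normal arc. If $\bdy\beta_0$ lies on a single component $\alpha'$, then $\beta_0$ separates the two vertices in one hemisphere bounded by $\alpha'$, and resolving the two crossings produces a vertex-linking (length-three) loop that persists under all further resolutions; this contradicts Lemma~\ref{l:NoLengthThree}, which says that resolving same-type loops never creates length-three components. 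So the missing ingredient in your sketch is precisely the use of Lemma~\ref{l:NoLengthThree} (or an equivalent obstruction) to exclude the second local pattern, rather than a sign comparison with the torus.
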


\begin{proof}
If either $\alpha$ or $\beta$ contains a component of length three, then it will be disjoint from the other collection. Thus, we may assume that all points of $\alpha \cap \beta$ lie on loops of length at least four. We will call such loops {\it long}. The long loops of $\alpha$ will all be parallel, as will the long loops of $\beta$. Thus, if there are any intersection points at all, then no long loop of $\alpha$ can be parallel to a long loop of $\beta$. 

By way of contradiction, we now assume that two points of $\alpha \cap \beta$ are of opposite sign. We claim that then there is a subarc of $\alpha$ or $\beta$ that connects two points of $\alpha \cap \beta$ of opposite sign. If not, then we may choose a component $\alpha_+$ of $\alpha$ with only positive intersection points, and a component $\beta_-$ of $\beta$ with only negative intersection points. However, it then follows that $\alpha_+$ is disjoint from $\beta_-$, which cannot happen for two non-parallel long loops. We proceed, then, without loss of generality assuming there is a subarc of $\beta$ that connects points of opposite sign. It follows that there is such a subarc, $\beta_0$, which does not meet $\alpha$ in its interior. 

\begin{figure}
\psfrag{a}{$\alpha'$}
\psfrag{A}{$\alpha''$}
\psfrag{b}{$\beta_0$}
\[\includegraphics[width=4.5 in]{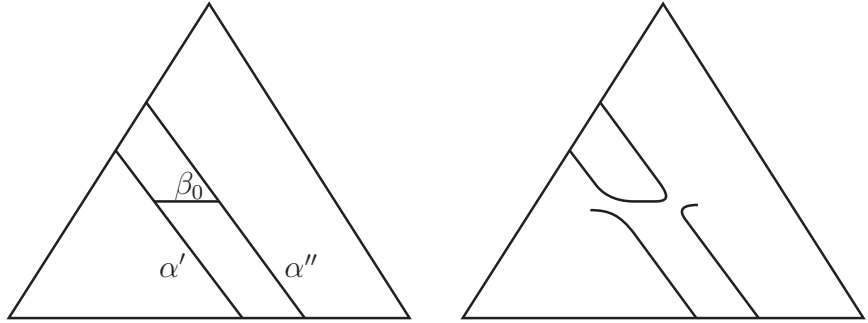}\]
\caption{Resolving intersections of opposite signs produces a non-normal arc.}
\label{f:Arcs_in_face}
\end{figure}

There are now two cases. Suppose first that the points of $\bdy \beta_0$ lie on different components $\alpha'$ and $\alpha''$ and of $\alpha$. As all long components of $\alpha$ are normally parallel, $\alpha'$ and $\alpha''$ cobound an annulus of $\bdy \sigma$, with $\beta_0$ a spanning arc. By making this annulus thin, we may assume that $\beta_0$ lies in a face of $\sigma$. However, resolving the two intersections at each end of $\beta_0$ then produces a non-normal arc. (See Figure \ref{f:Arcs_in_face}.)

\begin{figure}
\psfrag{a}{(a)}
\psfrag{b}{(b)}
\psfrag{c}{$\alpha'$}
\psfrag{d}{(c)}

\[\includegraphics[width=4.5 in]{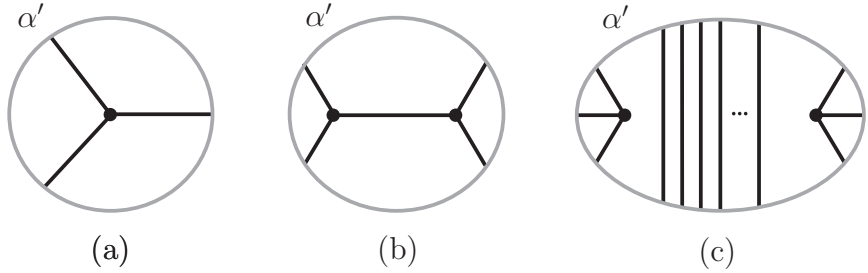}\]
\caption{The three possibilities for one hemisphere of $\bdy \sigma$, bounded by $\alpha'$. The black points and edges indicate vertices and suburbs of edges of $\bdy \sigma$, respectively.}
\label{f:tetCurve}
\end{figure}

The second case is when the points of $\bdy \beta_0$ lie on the same component $\alpha'$ of $\alpha$. Note that $\alpha'$ is a loop that divides $\bdy \sigma$ into two hemispheres, each intersecting the boundary of the tetrahedron in one of the three ways as seen in Figure~\ref{f:tetCurve}. The loop $\alpha'$ cannot be as depicted in Figure \ref{f:tetCurve}(a), where one of these hemispheres contains a single vertex of $\bdy \sigma$, since it is long. Thus, we may assume both hemispheres contain two vertices of $\bdy \sigma$. Let $D$ be the hemisphere that contains $\beta_0$. Note that $\beta_0$ then divides $D$ into two subdisks, and by the minimality of $|\alpha \cap \beta|$, each such subdisk will contain a vertex of $\sigma$. Resolving the intersections at each end of $\beta_0$ then produces a vertex linking loop. (See Figure \ref{f:Resolving_length_3_curve}). We will leave it as an exercise for the reader that such a loop will then persist after all further resolutions, producing a length three normal loop.  By Lemma \ref{l:NoLengthThree}, it follows that the long loops of $\alpha$ and $\beta$ could not have been the same type. 
\end{proof}

\begin{figure}
\psfrag{a}{$\alpha'$}
\psfrag{b}{$\beta_0$}
\psfrag{l}{length 3 loop}
\psfrag{n}{non-normal arc}
   \centering
   \includegraphics[width=3in]{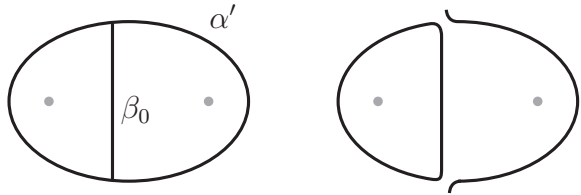} 
   \caption{Resolving the intersection points of opposite sign of $\alpha'$ and $\beta_0$ produces a length three curve or non-normal arc.}
\label{f:Resolving_length_3_curve}
\end{figure}

\section{Helicoids with the same {\it axis}.}
\label{s:axis}

\begin{dfn}
Let $H_*$ be a disk properly embedded in a tetrahedron, whose boundary is a normal loop. If $\bdy H_*$ meets some pair of opposite edges $e, e'$ in single points then we say $H_*$ is a {\it helicoid}, and $\{e,e'\}$ is an {\it axis} of $H_*$. 
\end{dfn}

Note that both quadrilaterals and octagons are helicoids with two axes, and all other helicoids have a unique axis. (See Figure~\ref{f:Quad_and_octagon}.) However, the boundary of each helicoid with axis $\{e,e'\}$ meets $e$ in a unique normal arc type as in Figure \ref{f:ArcTypes}.  

\begin{dfn}
\label{def:handedness}
Given a helicoid $H_*$ with axis $\{e,e'\}$ in a tetrahedron $\sigma$, there is an orientation-preserving simplicial homeomorphism from $\sigma$ to the tetrahedron pictured in Figure \ref{f:ArcTypes} (equipped with the standard orientation on $\mathbb R^3$), where $e$ and $e'$ are taken to the edges that meet arc types $a$ and $b$. We say $H_*$ is {\it right-handed with respect to $\{e,e'\}$} if $a(\partial H_*)=0$ and {\it left-handed with respect to $\{e,e'\}$} if $b(\partial H_*)=0$.
\end{dfn}

\begin{dfn}
\label{def:twisting}
Let $H_*$ be a helicoid with $4(n+1)$ normal arcs comprising $\bdy H_*$, and with axis $\{e,e'\}$ in a tetrahedron $\sigma$. We say the {\it twisting} of $H_*$, $t(H_*)$, is $+n$ if $H_*$ is right-handed with respect to $\{e,e'\}$ and $-n$ if it is left-handed with respect to $\{e,e'\}$.
\end{dfn}

\begin{figure}
\psfrag{e}{\tiny{$e$}}
\psfrag{p}{\tiny{$e'$}}
\psfrag{f}{\tiny{$f$}}
\psfrag{g}{\tiny{$f'$}}
   \centering
   \includegraphics[width=3in]{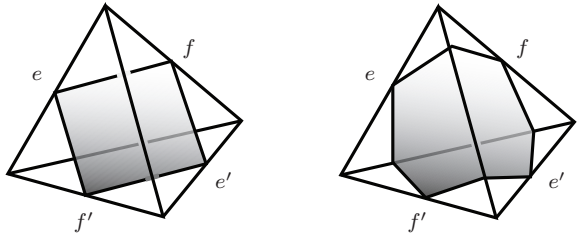} 
   \caption{Quadrilaterals and octagons are the only two locally helical surfaces with more than one choice of axis. Note here that the quadrilateral is left-handed and the octagon is right-handed with respect to $\{e, e'\}$, with the opposite being the case with respect to $\{f, f'\}$.}
\label{f:Quad_and_octagon}
\end{figure}

Note that an octagon can be regarded as having $+1$ or $-1$ twisting, depending on the choice of its axis. The handedness of the twisting of a quadrilateral is also dependent on a choice of axis, but in either case the value of the twisting is zero. Thus, a helical surface with no octagons has a well-defined net twisting. When there are octagons present, however, the net twisting will depend on choices of axes, motivating the following definition:

\begin{dfn}
\label{d:NetTwisting}
Let $M$ be a triangulated 3-manifold containing a locally helical surface $H$, and let $\Delta$ be a set of tetrahedra in the triangulation of $M$. We say the {\it net twisting of $H$ in $\Delta$ is bounded by $n$} if 
\[-n \le \sum \limits _{\sigma \in \Delta} t(H \cap \sigma) \le n\]
for all choices of axes of the components of $H \cap \sigma$, for each $\sigma \in \Delta$. 
\end{dfn}

\begin{dfn}
\label{d:eta}
Let $\sigma$ be an oriented tetrahedron. For any two normal curves $\alpha$ and $\beta$ on $\bdy \sigma$ in general position, let $\eta_\sigma(\alpha \cap \beta)$ denote the difference between the total number of positive and negative intersection points of $\alpha \cap \beta$ on the 2-simplices of $\bdy \sigma$.
\end{dfn}

\begin{lem}
\label{l:NumTurns}
Let $H_*$ and $G_*$ be helicoids with the same handedness with respect to the same choice of axis. Then \[\eta_\sigma(\bdy H_* \cap \bdy G_*)=2(t(H_*)-t(G_*)).\]
\end{lem}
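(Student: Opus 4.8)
The plan is to strip the statement down to a single, explicit comparison and then do an honest count of intersection points, with Lemma~\ref{l:SameSign} doing the work of ruling out cancellation. First I would make the standard reductions. Since $H_*$ and $G_*$ have the same handedness with respect to the same axis $\{e,e'\}$, applying the orientation-preserving simplicial homeomorphism of Definition~\ref{def:handedness} we may assume both are right-handed, so that $a(\bdy H_*)=a(\bdy G_*)=0$; in particular, by Definition~\ref{d:type}, both $\bdy H_*$ and $\bdy G_*$ are type $a$ and (by Lemma~\ref{l:Rotation}) are determined up to normal isotopy by their edge-intersection numbers. The left-handed case reduces to this one by reversing the orientation of $\sigma$, which negates $\eta_\sigma$ and simultaneously negates each twisting, leaving the asserted identity unchanged. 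Write $p=t(H_*)\ge 0$ and $q=t(G_*)\ge 0$. If $p=q$ then $\bdy H_*$ and $\bdy G_*$ meet every edge in the same number of points, hence are normally isotopic and can be made disjoint, and both sides vanish. Finally, since the normal sign of $\alpha_0\cap\beta_0$ is opposite that of $\beta_0\cap\alpha_0$, we have $\eta_\sigma(\bdy H_*\cap\bdy G_*)=-\eta_\sigma(\bdy G_*\cap\bdy H_*)$, so we may assume $p>q$.

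Next comes the core computation. By Lemma~\ref{l:SameSign}, after normally isotoping $\bdy H_*$ and $\bdy G_*$ into minimal position all points of $\bdy H_*\cap\bdy G_*$ carry the same normal sign; hence $\eta_\sigma(\bdy H_*\cap\bdy G_*)=\pm|\bdy H_*\cap\bdy G_*|$, and it suffices to show this minimal intersection number is $2(p-q)$ and that the sign is $+$. Here I would record the normal data of a right-handed type-$a$ helicoid of twisting $n$: as in Figure~\ref{f:16-gon} its boundary meets the axis pair $\{e,e'\}$ in one point each and meets the two remaining pairs of opposite edges in $n$ and $n+1$ points, so it has length $4(n+1)$ in agreement with Definition~\ref{def:twisting}. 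From these edge weights one reads off (equivalently, from the classification of essential curves on the four-punctured sphere $\bdy\sigma$ minus its vertices) that $\bdy H_*$ and $\bdy G_*$ differ by $p-q$ applications of the Dehn twist about the annular neighborhood of $\{e,e'\}$, giving minimal intersection number $2(p-q)$. Concretely, I would produce an explicit minimal-position model: outside a neighborhood $A$ of $e\cup e'$ the two boundary curves can be made to coincide, and inside $A$ the curve $\bdy H_*$ winds $p-q$ more times than $\bdy G_*$, producing exactly $2(p-q)$ transverse crossings.

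To fix the sign it then suffices to evaluate $\eta_\sigma$ in one base case. Comparing the twisting-$p$ helicoid with the twisting-$(p-1)$ helicoid, the minimal-position model has exactly two crossings (both inside $A$); orienting the two arcs locally to agree with the orientation of the face as in the definition of the normal sign (Figure~\ref{f:NormalSwitches}) and reading off the regular exchange shows each contributes $+1$, so $\eta_\sigma=+2=2\big(p-(p-1)\big)$. Because this model is telescoping — increasing $q$ by one removes exactly two crossings of this same sign — the correct value $+2(p-q)$ propagates to all $p>q$, and hence (with the reductions above) $\eta_\sigma(\bdy H_*\cap\bdy G_*)=2(t(H_*)-t(G_*))$ in general.

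The step I expect to be the real obstacle is the middle one: extracting the value $2(p-q)$ \emph{with its sign} from the combinatorics, i.e.\ building a genuinely minimal-position model of the two helicoid boundaries near the axis and verifying that every crossing is counted with the same, correctly-signed contribution. Lemma~\ref{l:SameSign} guarantees there is no cancellation, so what remains is a careful bookkeeping problem about normal arcs in the annulus around $\{e,e'\}$; the delicate part is threading the orientation conventions of Definition~\ref{def:handedness} and of the normal sign through consistently, so that right-handed helicoids of larger twisting really do contribute $+2$ per extra twist rather than $-2$.
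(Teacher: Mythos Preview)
Your reduction and overall strategy are sound and match the paper's first move: both use Lemma~\ref{l:SameSign} to conclude $\eta_\sigma(\bdy H_*\cap\bdy G_*)=\pm|\bdy H_*\cap\bdy G_*|$, reducing the problem to an unsigned count plus a sign check. From there, however, the two arguments diverge. The paper does not use a Dehn-twist/annulus model at all. Instead it observes that $\bdy H_*$ is normally parallel to the boundary of a neighborhood of an arc $h$ on $\bdy\sigma$ joining two vertices, and likewise $\bdy G_*$ to an arc $g$; the count then splits into two explicit cases according to whether $h$ and $g$ share an endpoint, giving $|\bdy H_*\cap\bdy G_*|=4|h\cap g|$ with $t(H_*)-t(G_*)=2|h\cap g|$ in one case and $|\bdy H_*\cap\bdy G_*|=4|h\cap g|-2$ with $t(H_*)-t(G_*)=2|h\cap g|-1$ in the other. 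The sign is then handled by a symmetry argument (swapping orientation or swapping the two curves), not by a base-case computation.

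Your alternative route via Dehn twists on the four-punctured sphere is reasonable in spirit, but the explicit model you describe is not correct as written. A neighborhood $A$ of $e\cup e'$ on the $2$-sphere $\bdy\sigma$ is a pair of disjoint \emph{disks}, not an annulus; inside each of these disks the helicoid boundary is a single arc crossing the edge once, so there is no ``winding'' there. The annulus is the \emph{complement} $\bdy\sigma\setminus A$, in which each of $\bdy H_*$ and $\bdy G_*$ sits as two spanning arcs, and it is there that the twist discrepancy lives. So the sentence ``outside $A$ the two boundary curves can be made to coincide, and inside $A$ the curve $\bdy H_*$ winds $p-q$ more times'' has the two regions exchanged. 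Once you swap them, you still owe a genuine count: two pairs of spanning arcs in an annulus differing by $p-q$ twists about the core do intersect in $2(p-q)$ points, but this requires an argument (e.g.\ via the slope parameterization of essential curves on the four-punctured sphere, or via the torus double cover), not just the assertion that each extra twist adds two crossings. Your base-case-plus-telescoping plan for the sign would then work, though again the ``reading off the regular exchange'' in the $(p,p-1)$ case needs to actually be carried out against the conventions in Definition~\ref{def:handedness} and Figure~\ref{f:NormalSwitches}. In short: right idea, wrong side of the annulus, and the key count is asserted rather than shown; the paper's arc-neighborhood model sidesteps these issues by making the count completely explicit.
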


\begin{proof}
As $H_*$ and $G_*$ are helicoids with the same handedness with respect to some choice of axis, it follows that their boundaries are loops of the same type.  It thus  follows immediately from Lemma~\ref{l:SameSign} that $\eta_{\sigma} (\bdy H_* \cap \bdy G_*) = \pm |\bdy H_* \cap \bdy G_*|$, where the sign is determined by the normal intersection sign of the intersection points. Without loss of generality, assume this sign is positive. Our goal is to show 
\[|\bdy H_* \cap \bdy G_*|=2(t(H_*)-t(G_*)).\]

$\bdy H_*$ is a loop on $\bdy \sigma$ dividing it into two hemispheres, where each hemisphere contains two of the vertices of $\sigma$. Let $v$ and $w$ be the vertices in one such hemisphere, and let $h$ be an arc in this hemisphere connecting them. Note that the arc $h$ can be chosen so that $\bdy H_*$ is normally parallel to a neighborhood of $h$. 

Similarly, $\bdy G_*$ is parallel to the boundary of a neighborhood of an arc $g$ connecting two vertices of $\sigma$. The arc $g$ may be chosen so that at least one of it's endpoints is distinct from the endpoints of $h$.

\begin{figure}
\[\includegraphics[width=5 in]{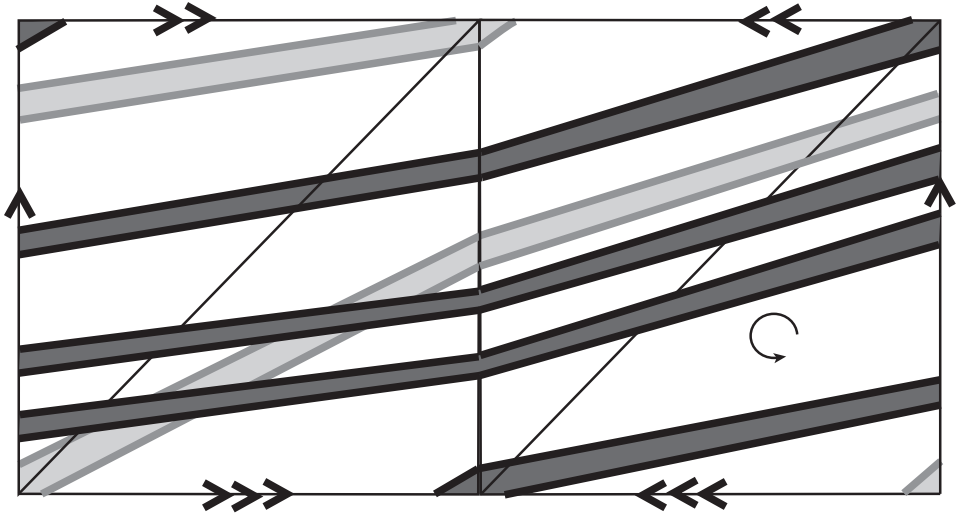}\]
\caption{$\bdy H_*$ is the black curve, with a neighborhood of $h$ being the dark gray band. $\bdy G_*$ is depicted in lighter gray. Here, $|h \cap g|=2$, $\eta_\sigma(\bdy H_* \cap \bdy G_*) =8$, $t(H_*)=6$, $t(G_*)=2$ and thus $t(H_*)-t(G_*)=4$.}
\label{f:UnfoldedCase1}
\end{figure}

There are now two cases. If both endpoints of $g$ are distinct from the endpoints of $h$ then the curves can be arranged as in Figure \ref{f:UnfoldedCase1}. Note that $\bdy H_* \cap \bdy G_*$ contains four intersection points for each crossing of $h$ and $g$. Furthermore, the difference in the twisting, $t(H_*)-t(G_*)$ is twice the number of crossings of $h$ and $g$. Thus, the desired equation holds. 

All intersection points depicted in the figure are positive, as is the twisting. Note that switching the orientation and keeping the ordering of the curves the same changes the sign of both the intersection points and the twisting. Alternatively, keeping the orientation fixed but changing the ordering of the curves will also change the sign of the intersection points, and reverse the order of the operands on the right side of the desired equation. Thus the equation still holds. 

\begin{figure}
\[\includegraphics[width=5 in]{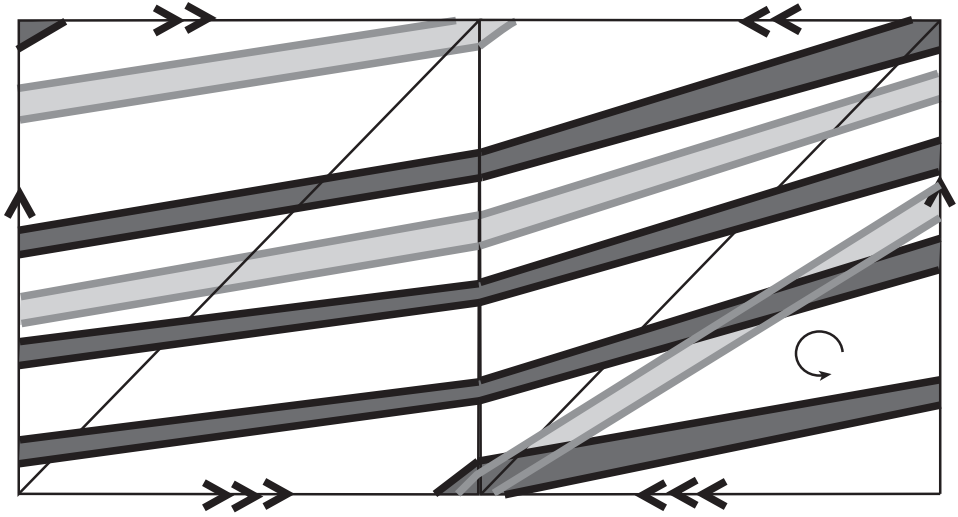}\]
\caption{In this case, $h$ and $g$ share an endpoint. Here, $|h \cap g|=2$, $\eta_\sigma(\bdy H_* \cap \bdy G_*) =6$, $t(H_*)=6$, $t(G_*)=3$ and thus $t(H_*)-t(G_*)=3$.}
\label{f:UnfoldedCase2}
\end{figure}

In the second case, $h$ and $g$ have an endpoint in common, as in Figure \ref{f:UnfoldedCase2}. In this case $|\bdy H_* \cap \bdy G_*|=4|h \cap g|-2$ and $t(H_*)-t(G_*)=2|h \cap g|-1$. Thus we still obtain the desired relationship between $\eta_\sigma(\bdy H_* \cap \bdy G_*)$ and $t(H_*)-t(G_*)$. 

\end{proof}

\section{{\em Compatibility classes} of surfaces}
\label{s:compatibility}

The results of this section extend previous results that restrict intersections of boundary curves realized by compatibility classes of surfaces, found e.g.~in \cite{JSJ}, \cite{js:98}, and \cite{Hatcher}. 

\begin{dfn}
Two surfaces in a triangulated 3-manifold are {\it compatible} if they meet the boundary of each tetrahedron in a collection of normal curves that can be normally isotoped to be disjoint\footnote{We are allowing pseudo-triangulations, i.e.~$M$ is realized as a collection of tetrahedra with face-pairings. Hence, for each 3-cell $\sigma$ in $M$ there is a map $\pi:\Sigma \to \sigma$, where $\Sigma$ is a 3-simplex. Here we consider two surfaces to be compatible if they meet $\bdy \sigma$ in curves whose  preimages can be isotoped to be disjoint on $\bdy \Sigma$.}.
\end{dfn}

Henceforth we will assume that if $\alpha_0$ and $\beta_0$ are contained in a 2-simplex $\delta \subset \bdy M$, then the orientation on $\delta$ is induced by the orientation on $M$. Hence, for such curves we may reference the sign of each point of $\alpha_0 \cap \beta_0$ without mention of the orientation of the 2-simplex that contains it. 

In the next lemma, we show that two compatible surfaces have a symmetric relationship between the signs of their normal intersections on the boundary of a subcomplex, $\Delta$.

\begin{lem}
\label{l:NetZero}
Let $M$ be a closed, oriented, triangulated 3-manifold. Let $\Delta$ be a set of tetrahedra in the triangulation of $M$. Suppose $A$ and $B$ are two  locally helical surfaces in $M$ that are compatible outside $\Delta$. Let $\bdy _\Delta A=\bdy (A \cap \Delta)$ and $\bdy _\Delta B=\bdy (B \cap \Delta)$. Suppose $A$ and $B$ have been normally isotoped so that $|\bdy _\Delta A \cap \bdy _\Delta B|$ is minimal. Then the number of points of $\bdy _\Delta A \cap \bdy _\Delta B$ with positive normal sign equals the number of points with negative normal sign.
\end{lem}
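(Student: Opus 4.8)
The plan is to show that the signed count $\eta := \eta_\sigma$ summed over all 2-simplices in $\bdy \Delta$ vanishes by exhibiting it as a \emph{closed-manifold} homological quantity that is forced to be zero by the compatibility hypothesis. First I would observe that, by Lemma~\ref{l:SameSign}, on each 2-simplex $\delta \subset \bdy\Delta$ every point of $\bdy_\Delta A \cap \bdy_\Delta B$ lying on a common long loop has the same sign, and components of length three contribute nothing; so the minimal-position hypothesis means $\eta$ genuinely computes the algebraic intersection number of the two curve systems on $\delta$. Summing over all 2-simplices of $\bdy\Delta$, the quantity in question is
\[
N_+ - N_- \;=\; \sum_{\delta \subset \bdy\Delta} \eta_\sigma\big(\bdy_\Delta A \cap \bdy_\Delta B \cap \delta\big).
\]
I want to show this sum is zero.

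The key step is the following rewriting. Each 2-simplex $\delta$ of the triangulation is shared by exactly two tetrahedra (since $M$ is closed). The simplices of $\bdy\Delta$ are exactly those $\delta$ shared by a tetrahedron in $\Delta$ and a tetrahedron outside $\Delta$ (or by two tetrahedra of $\Delta$ that happen to be glued along $\delta$ in a way that still leaves $\delta$ on $\bdy\Delta$ --- such faces, if present, contribute in canceling pairs and can be set aside). On a face $\delta$ shared by $\sigma\in\Delta$ and $\sigma'\notin\Delta$, the curve $\bdy_\Delta A \cap \delta$ is literally $A\cap\delta$ and similarly for $B$, because $A$ and $B$ each meet the closed manifold's 2-skeleton in honest closed normal curves. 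Now here is where compatibility enters: $A$ and $B$ are compatible outside $\Delta$, so on $\sigma'$ their normal curves $A\cap\bdy\sigma'$ and $B\cap\bdy\sigma'$ can be normally isotoped to be \emph{disjoint}. The algebraic intersection number of two disjoint curve systems on the closed surface $\bdy\sigma'$ is zero; but the algebraic intersection number is a normal-isotopy invariant (indeed a homological invariant on the closed surface $\bdy\sigma'$), so
\[
\sum_{\delta \subset \bdy\sigma'} \eta_\sigma\big(A \cap B \cap \delta\big) \;=\; 0
\quad\text{for every }\sigma'\notin\Delta,
\]
where the four faces of $\sigma'$ are oriented as the boundary of $\sigma'$. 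Summing this vanishing identity over all $\sigma'\notin\Delta$: each face $\delta$ interior to the complement is counted twice with opposite induced orientations and cancels, and what survives is exactly the sum over faces of $\bdy\Delta$, with the orientation each such face inherits as a boundary face of its complement-side tetrahedron. Since $M$ is oriented and $\Delta$ is oriented compatibly, this is the negative of the orientation $\delta$ inherits from the $\Delta$ side; so $\sum_{\delta\subset\bdy\Delta}\eta_\sigma(\bdy_\Delta A\cap\bdy_\Delta B\cap\delta) = 0$, which is exactly $N_+ = N_-$.

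The main obstacle I anticipate is bookkeeping the orientations carefully: one must verify (i) that $\eta_\sigma$ on a single closed surface $\bdy\sigma'$ really is additive over its faces and is a homological/isotopy invariant, so that disjointness on $\bdy\sigma'$ forces the per-tetrahedron sum to vanish, and (ii) that when two adjacent tetrahedra outside $\Delta$ are glued along a face $\delta$, the contributions of $\delta$ from the two sides are equal and opposite, so the telescoping works. Point (i) is a standard fact about algebraic intersection numbers of 1-cycles on a closed oriented surface --- two homologous-to-disjoint systems have intersection number $0$ --- and point (ii) is exactly the statement that the induced boundary orientations on a shared face from its two adjacent tetrahedra are opposite, which is the definition of $M$ being an oriented triangulated manifold. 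One subtlety worth flagging explicitly in the writeup: curves of $A$ or $B$ that are length-three (vertex-linking) on some $\bdy\sigma'$ are disjoint from everything and contribute nothing, and a pair of long curves of the same type on $\bdy\sigma'$ that happen to be normally parallel already intersect zero times after minimal isotopy; in all cases Lemma~\ref{l:SameSign} guarantees no cancellation \emph{within} a single face is being hidden, so the face-by-face signed count is the honest algebraic intersection number and the telescoping argument applies verbatim.
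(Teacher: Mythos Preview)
Your overall architecture matches the paper's proof exactly: show $\eta_{\sigma}(A\cap B)=0$ for each tetrahedron $\sigma\notin\Delta$ using the compatibility hypothesis, sum over all such $\sigma$, and observe that interior faces cancel in pairs so that only the $\bdy\Delta$ contribution survives. The paper carries out precisely this telescoping argument.

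There is one point where your justification needs correction. You treat $\eta_\sigma$ as the ordinary algebraic intersection number of $1$-cycles on the sphere $\bdy\sigma$ and invoke homological invariance as a ``standard fact.'' But the \emph{normal sign} defined in the paper is not the usual algebraic sign: it is determined by the direction of the regular exchange and the orientation of the face, and makes sense for \emph{unoriented} arcs --- so it is not literally a homology pairing. (Indeed, on $S^2$ the homological intersection number of any two $1$-cycles is zero, which would make compatibility irrelevant.) The paper closes this gap with a direct argument: perform a straight-arc normal isotopy $\alpha_t$ from $\alpha$ to a curve disjoint from $\beta$, and check that at each critical time an opposite-sign pair of normal intersections is created or destroyed across an edge, so $\eta_\sigma$ is unchanged throughout and hence equals $\eta_\sigma(\alpha'\cap\beta)=0$. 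You should replace the appeal to homology with this elementary isotopy verification.

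A smaller remark: your invocations of Lemma~\ref{l:SameSign} are unnecessary here and slightly misplaced --- that lemma requires the long loops to be of the same \emph{type}, which is not part of the present hypothesis, and in any case $\eta$ is by definition the signed count, so nothing needs to be checked about ``hidden cancellation within a face.'' The paper's proof does not use Lemma~\ref{l:SameSign} at all.
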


\begin{proof}
Consider a tetrahedron $\sigma$ of $M$ that is not in $\Delta$. Let $\alpha$ denote a component of $A \cap \bdy \sigma$, and $\beta$ a component of $B \cap \bdy \sigma$. Orient each 2-simplex of $\bdy \sigma$ by the induced orientation from $\sigma$, so that each point of $\alpha \cap \beta$ has a well-defined sign. Since $A$ and $B$ intersect minimally, we may assume each normal arc of $\alpha$ and $\beta$ is a straight line segment. Recall from Definition \ref{d:eta} that $\eta_\sigma(\alpha \cap \beta)$ denotes the difference between the total number of positive and negative intersection points of $\alpha \cap \beta$ on the 2-simplices of $\bdy \sigma$.


As $A$ and $B$ are compatible, there is an isotopy from $\alpha$ to a normal loop $\alpha'$, also consisting of straight normal arcs, in $\bdy \sigma$ that is disjoint from $\beta$. We can choose such an isotopy, $\alpha_t$, so that for all $t$, each normal arc of $\alpha_t$ is a straight line segment and $\alpha_t \cap \beta$ contains at most one point of the 1-skeleton. Let $\{t_i\}$ denote the critical values of $\alpha_t \cap \beta$, i.e.~the values of $t$ such that $\alpha_t$ and $\beta$ do not intersect transversely on $\partial \sigma$. It follows that for each $i$, $\alpha_{t_i} \cap \beta$ includes a point of the 1-skeleton.

\begin{figure}
\psfrag{a}{$\alpha_{t_i-\epsilon}$}
\psfrag{A}{$\alpha_{t_i+\epsilon}$}
\psfrag{b}{$\beta$}
\[\includegraphics[width=3.5 in]{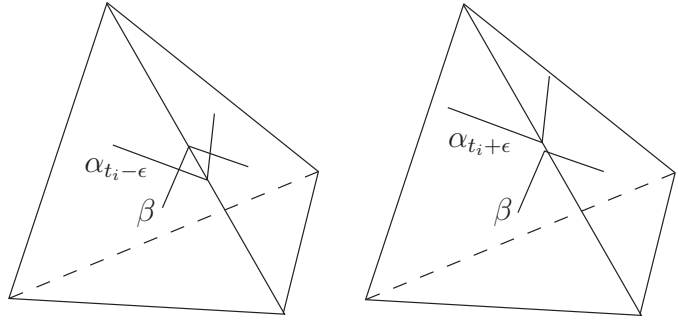}\]
\caption{Positive and negative intersections cancel as $t$ increases through $t_i$.}
\label{f:PositiveNegativeCancellation}
\end{figure}

Just before (or after) $t_i$, $\alpha_t$ meets $\beta$ as in Figure \ref{f:PositiveNegativeCancellation}. Here we see two intersections, one of each normal sign, of $\alpha_t \cap \beta$ which cancel as $t$ increases through $t_i$. It follows that $\eta_\sigma(\alpha_{t_i-\epsilon} \cap \beta)=\eta_\sigma(\alpha_{t_i+\epsilon} \cap \beta)$. As $\alpha' \cap \beta=\emptyset$, we conclude $\eta_\sigma(\alpha_{t} \cap \beta)$ is zero for all non-critical $t$. In particular, it must have been the case that $\eta_\sigma(\alpha \cap \beta)=0$

Let $\eta_\sigma(A \cap B)$ now denote the sum, over all curves $\alpha$ of $A \cap \bdy \sigma$ and $\beta$ of $B \cap \bdy \sigma$ of $\eta_{\sigma}(\alpha \cap \beta)$. It follows from the above argument that $\eta_\sigma(A \cap B)=0$. Thus, the sum over all tetrahedra $\sigma$ not in $\Delta$ of $\eta_\sigma(A \cap B)$ is also zero. 

Now note that if $\delta$ is an interior 2-simplex, then the normal sign of any intersection point of $A \cap \delta$ and $B \cap \delta$ is opposite from the perspective of the tetrahedra on either side of $\delta$. Hence, the sum of $\eta_\sigma(A \cap B)$, over all tetrahedra $\sigma$, must be equal to the difference of the number of positive and negative intersection points of $\bdy _\Delta A \cap \bdy _\Delta B$. As we have reasoned above that this total is zero, the result follows. 
\end{proof}

\section{Main Proof}

We now put the three notions of axis, twisting, and compatibility together in the following definition. 

\begin{dfn}
\label{d:consistency}
Let $M$ be a closed, oriented, triangulated 3-manifold, and let $\Delta$ be a set of tetrahedra in the triangulation of $M$. Two locally helical surfaces $H$ and $G$ are said to be {\it consistent} in $(M; \Delta)$ if they are compatible outside of $\Delta$, and if for all $\sigma \in \Delta$, $H \cap \sigma$ and $G \cap \sigma$ have the same handedness with respect to the same choice of axis.
\end{dfn}

Theorem \ref{t:mainthm} is a consequence of the following lemma. 

\begin{lem}
\label{mainlem}
Let $M$ be a closed, oriented, triangulated 3-manifold, and let $\Delta$ be a set of tetrahedra in the triangulation of $M$. If $H$ and $G$ are consistent, locally helical surfaces in $(M;\Delta)$,  then the net twisting of $H \cap \Delta$ is the same as the net twisting of $G \cap \Delta$. 
\end{lem}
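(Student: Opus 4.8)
The plan is to derive the statement from the two accounting results already in hand: Lemma~\ref{l:NetZero}, which says that the signed count of $\bdy_\Delta H\cap\bdy_\Delta G$ vanishes, and Lemma~\ref{l:NumTurns}, which translates a local signed count into a difference of twistings. First I would normally isotope $H$ and $G$ so that $\bdy(H\cap\Delta)$ and $\bdy(G\cap\Delta)$ meet minimally; in this position $\bdy(H\cap\sigma)$ and $\bdy(G\cap\sigma)$ meet minimally on each $\bdy\sigma$, since an innermost bigon could be removed to reduce the global count. Running the cancellation from the proof of Lemma~\ref{l:NetZero} --- $\eta_\sigma(H\cap G)=0$ for every $\sigma\notin\Delta$ by compatibility, the contributions of the $2$-simplices interior to $\Delta$ cancel in pairs because the two induced orientations are opposite, and the leftover signed count across $\bdy\Delta$ is zero since $M$ is closed --- gives $\sum_{\sigma\in\Delta}\eta_\sigma(H\cap G)=0$.

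Next I would evaluate $\eta_\sigma(H\cap G)$ for a fixed $\sigma\in\Delta$. Triangle pieces are vertex-linking, so in minimal position their boundaries are disjoint from everything and contribute nothing; only the helical pieces matter. On the four-punctured sphere $\bdy\sigma$ any two disjoint normal loops of length at least four cobound a vertex-free annulus, hence are normally parallel; therefore the helical pieces of $H\cap\sigma$ are mutually parallel --- in particular all of them have the same twisting and handedness --- and the same holds for $G\cap\sigma$. Because $H$ and $G$ are consistent, these two families of loops share an axis and a handedness, so by Lemma~\ref{l:SameSign} every point of $\bdy(H\cap\sigma)\cap\bdy(G\cap\sigma)$ has the same sign and each pair consisting of a helical piece of $H$ and one of $G$ is already in minimal position. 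Applying Lemma~\ref{l:NumTurns} to every such pair and summing should give
\[
\eta_\sigma(H\cap G)=2\bigl(t(H\cap\sigma)-t(G\cap\sigma)\bigr),
\]
and then summing over $\sigma\in\Delta$ and invoking the previous paragraph would yield
\[
0=\sum_{\sigma\in\Delta}\eta_\sigma(H\cap G)=2\sum_{\sigma\in\Delta}\bigl(t(H\cap\sigma)-t(G\cap\sigma)\bigr),
\]
which is exactly the equality of net twistings demanded by Definition~\ref{d:NetTwisting}, for the common choices of axes.

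The one place I expect real work is the first displayed identity. Lemma~\ref{l:NumTurns} compares a \emph{single} pair of helicoids, so if $H\cap\sigma$ carries $p_\sigma$ parallel helical pieces and $G\cap\sigma$ carries $q_\sigma$ of them, summing naively over all $p_\sigma q_\sigma$ pairs produces $2\bigl(q_\sigma\,t(H\cap\sigma)-p_\sigma\,t(G\cap\sigma)\bigr)$ rather than $2\bigl(t(H\cap\sigma)-t(G\cap\sigma)\bigr)$. One therefore has to show that consistency forces the helical data of $H$ and of $G$ --- in particular the number of parallel helical pieces --- to agree in each tetrahedron of $\Delta$. The mechanism should be propagation: the boundary of a helical piece in a tetrahedron of $\Delta$ determines, through the closed surface $H$ (respectively $G$) itself, how that surface crosses the neighboring tetrahedra, so the helical data is carried across the $2$-simplices interior to $\Delta$ out to $\bdy\Delta$, where the compatibility of $H$ and $G$ pins it down. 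Making this precise, i.e., reconciling the local multiplicities with the global compatibility hypothesis, is the heart of the matter; once it is done, Lemmas~\ref{l:NetZero} and~\ref{l:NumTurns} close the argument as above.
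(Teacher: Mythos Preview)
Your overall architecture is exactly the paper's: first use the cancellation argument behind Lemma~\ref{l:NetZero} (interior $2$-simplices of $\Delta$ cancel in pairs, and compatibility outside $\Delta$ kills the rest) to obtain $\sum_{\sigma\in\Delta}\eta_\sigma(\bdy H_\sigma\cap\bdy G_\sigma)=0$; then invoke Lemma~\ref{l:SameSign} so that each $\eta_\sigma$ equals $\pm|\bdy H_\sigma\cap\bdy G_\sigma|$; then feed this into Lemma~\ref{l:NumTurns} and sum. The paper organizes the last step by splitting $\Delta$ into $\Delta_+$ and $\Delta_-$ according to the common sign on each tetrahedron, but that is cosmetic.

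Where you diverge is in your final paragraph. The multiplicity issue you raise --- that $H\cap\sigma$ might carry $p_\sigma$ parallel helical disks and $G\cap\sigma$ might carry $q_\sigma$ --- is simply not addressed in the paper. The paper sets $H_\sigma=H\cap\sigma$, $G_\sigma=G\cap\sigma$, and applies Lemma~\ref{l:NumTurns} once per tetrahedron to write $\eta_\sigma(\bdy H_\sigma\cap\bdy G_\sigma)=2\bigl(t(H_\sigma)-t(G_\sigma)\bigr)$ directly, i.e.\ it treats $H_\sigma$ and $G_\sigma$ as single helicoids. There is no propagation argument, no comparison of $p_\sigma$ with $q_\sigma$, and no discussion of multiple parallel pieces. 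So the ``real work'' you anticipate is absent from the paper's proof; it is an implicit single-helicoid reading of $H\cap\sigma$.

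Two remarks on your proposed fix. First, even if you could show $p_\sigma=q_\sigma$, summing Lemma~\ref{l:NumTurns} over all $p_\sigma q_\sigma$ pairs yields $\eta_\sigma=2p_\sigma q_\sigma(t_H-t_G)=2p_\sigma\bigl(t(H_\sigma)-t(G_\sigma)\bigr)$, and $\sum_\sigma p_\sigma\bigl(t(H_\sigma)-t(G_\sigma)\bigr)=0$ is still a \emph{weighted} identity, not the unweighted one you need. Second, consistency does not force $p_\sigma=q_\sigma$: taking $G$ to be two parallel copies of $H$ gives a consistent pair with $q_\sigma=2p_\sigma$ everywhere. So the route you sketch would not close the gap in any case; the paper's argument, as written, just does not enter this territory.
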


\begin{proof}
For each $\sigma \in \Delta$ let $H_\sigma=H \cap \sigma$ and $G_\sigma=G \cap \sigma$. As noted in the proof of Lemma \ref{l:NumTurns}, for each $\sigma \in \Delta$, $\bdy H_\sigma$ and $\bdy G_\sigma$ must be normal loops of the same type. Thus, by Lemma  \ref{l:SameSign}, for each $\sigma \in \Delta$, all points of $\bdy H_\sigma \cap \bdy G_\sigma$ have the same sign. Let $\Delta_+$ be the subset of $\Delta$ where this sign is positive, and $\Delta_-$ the subset of $\Delta$ where it is negative.  Thus, on each $\sigma \in \Delta_+$, 
\[\eta_\sigma (\bdy H_\sigma \cap \bdy G_\sigma)=|\bdy H_\sigma \cap \bdy G_\sigma|\]
and for all $\sigma \in \Delta_-$,
\[\eta_\sigma (\bdy H_\sigma \cap \bdy G_\sigma)=-|\bdy H_\sigma \cap \bdy G_\sigma|.\]

Consider the sum $\sum \limits _{\sigma \in \Delta}  \#(\bdy H_\sigma \cap \bdy G_\sigma)$, where $\#(\bdy H_\sigma \cap \bdy G_\sigma)$ denotes the signed intersection number of $\bdy H_\sigma$  and $\bdy G_\sigma$. Suppose $\sigma_1$ and $\sigma_2$ are adjacent tetrahedra in $\Delta$,   $p_1 \in \bdy H_{\sigma_1} \cap \bdy G_{\sigma_1}$, $p_2 \in \bdy H_{\sigma_2} \cap \bdy G_{\sigma_2}$, and $p_1$ is identified with $p_2$ in $M$.\footnote{Here we are allowing $\sigma_1$ to be equal to $\sigma_2$ when there are self-identifications, but in this case $p_1$ must be distinct from $p_2$.} Then the sign of $p_1$ will be opposite the sign of $p_2$, and thus $p_1$ and $p_2$ will cancel in  $\sum \limits _{\sigma \in \Delta}  \#(\bdy H_\sigma \cap \bdy G_\sigma)$. If, on the other hand, $p$ is a point of $\bdy H_{\sigma} \cap \bdy G_{\sigma}$ that is on a unique $\sigma \in \Delta$, then $p \in \bdy (H-\Delta) \cap \bdy(G-\Delta)$. By hypothesis, $H-\Delta$ and $G-\Delta$ are compatible surfaces, thus by Lemma \ref{l:NetZero} the number of positive and negative points of $\bdy (H-\Delta) \cap \bdy(G-\Delta)$ are equal. We conclude that $\sum \limits _{\sigma \in \Delta}  \#(\bdy H_\sigma \cap \bdy G_\sigma)=0$, or equivalently, 

\[\sum \limits _{\sigma \in \Delta_+}  |\bdy H_\sigma \cap \bdy G_\sigma| = \sum \limits _{\sigma \in \Delta_-}  |\bdy H_\sigma \cap \bdy G_\sigma|\]
and thus, 
\[\sum \limits _{\sigma \in \Delta_+}  \eta_\sigma(\bdy H_\sigma \cap \bdy G_\sigma) = -\sum \limits _{\sigma \in \Delta_-}  \eta_\sigma(\bdy H_\sigma \cap \bdy G_\sigma).\]

Applying Lemma \ref{l:NumTurns} to this equality now yields
\[\sum \limits _{\sigma \in \Delta_+} 2(t(H_\sigma)-t(G_\sigma))  = -\sum \limits _{\sigma \in \Delta_-}  2(t(H_\sigma)-t(G_\sigma)),\]
which implies
\begin{eqnarray*} 
0 & =& \sum \limits _{\sigma \in \Delta_+} 2(t(H_\sigma)-t(G_\sigma))+\sum \limits _{\sigma \in \Delta_-}  2(t(H_\sigma)-t(G_\sigma))\\
&=&\sum \limits _{\sigma \in \Delta}  2(t(H_\sigma)-t(G_\sigma)) \\
&=&\sum \limits _{\sigma \in \Delta}  t(H_\sigma)- \sum \limits _{\sigma \in \Delta} t(G_\sigma).
\end{eqnarray*}

Therefore, $\sum \limits _{\sigma \in \Delta}  t(H_\sigma) = \sum \limits _{\sigma \in \Delta} t(G_\sigma)$, i.e.~the net twisting is the same for all surfaces in the chosen consistency class.
\end{proof}

We are now ready to prove Theorem \ref{t:mainthm}.

\begin{proof}
Let $n$ be a positive integer, and consider the set of all locally helical surfaces (up to normal isotopy) that have total absolute twisting $\leq n$ in $M - \Delta$. The number of compatibility classes of surfaces in $M - \Delta$ is finite, since there are only a finite number of normal loops on each tetrahedron of length $\leq 4(n+1)$. Moreover, there are only three possible axes for each tetrahedron in $\Delta$, and two choices of handedness for each. Thus, the number of consistency classes for $(M; \Delta)$ is finite. Theorem \ref{t:mainthm} thus immediately follows from Lemma \ref{mainlem}.
\end{proof}

\bibliographystyle{alpha}

\end{document}